\documentclass[11pt,reqno,section]{amsart}
\usepackage[english]{babel}
\usepackage[utf8]{inputenc}
\usepackage{amsmath}
\usepackage{amsthm}
\usepackage{verbatim}
\usepackage{enumerate}
\usepackage{amssymb}
\usepackage{amsfonts}
\usepackage{amscd,bezier}
\usepackage{anysize}
\usepackage{indentfirst}
\usepackage{upref}
\usepackage{color}
\usepackage[pagewise]{lineno}
\usepackage{tikz}
\usepackage{theoremref}
\usetikzlibrary{matrix}
\usepackage{pstricks}
\usepackage{subfig}
\usepackage{graphicx}
\usepackage{pstricks}
\usepackage{amscd}
\usepackage{relsize}
\usepackage[toc,page]{appendix}
\usepackage{commath}
\usepackage{mathrsfs}
\usepackage[hidelinks]{hyperref}

\allowdisplaybreaks

\numberwithin{equation}{section}

\newtheorem{theorem}{Theorem}[section]
\newtheorem{lemma}[theorem]{Lemma}

\newtheorem{definition}[theorem]{Definition}
\newtheorem{proposition}[theorem]{Proposition}
\newtheorem{corollary}[theorem]{Corollary}

\newtheorem{example}[theorem]{Example}

\newtheorem{remark}[theorem]{Remark}

\marginsize{3cm}{3cm}{3cm}{3cm}

\newcommand{\NmuD}{{\mathrm{N}\mu\mathrm{D}}}

\newcommand{\walpha}{{\widetilde{\alpha}}}
\newcommand{\wbeta}{{\widetilde{\beta}}}

\newcommand{\muD}{{\mu\mathrm{D}}}

\newcommand{\Nmu}{{\mathrm{N}\mu}}

\renewcommand{\S}{\mathscr{S}}
\newcommand{\U}{\mathscr{U}}
\newcommand{\B}{\mathscr{B}}
\newcommand{\V}{\mathscr{V}}
\newcommand{\vPhi}{\widehat{\Phi}}
\newcommand{\F}{(2.2)_{\tau\in \mathbb{R}}}
\newcommand{\FT}{(2.9)_{\tau\in \mathbb{R}}}

\newcommand{\W}{\mathscr{W}}

\newcommand{\Si}{\Sigma}

\newcommand{\Id}{\mathrm{Id}}

\newcommand{\rang}{{\mathrm{range}\,}}

\renewcommand{\P}{\mathrm{P}}

\newcommand{\sgn}{\mathrm{sgn}}

\newcommand{\qeg}{\mathrm{q}}
\newcommand{\ceg}{\mathrm{c}}
\newcommand{\seg}{\mathrm{s}}
\newcommand{\ueg}{\mathrm{u}}

\newcommand{\A}{\mathbb{A}}

\renewcommand{\H}{\mathbb{H}}
\renewcommand{\O}{\mathbb{O}}

\numberwithin{equation}{section}

\allowdisplaybreaks
\usepackage{color}

\title[Nonexponential and nonuniform behavior on skew-products]{Dichotomies Faster or Slower than exponential are Irrelevant for Skew-Product Flows}

\author[N.~Jara]{N\'estor Jara}
\author[I.P.~Longo]{Iacopo P.~Longo}
\author[M.~Rasmussen]{Martin Rasmussen}
\address[N.~Jara]{Departamento de Matemática, Universidad Técnica Federico Santa María, Casilla 110-V, Valparaíso, Chile.}
\address[I.P.~Longo]{University of Exeter, Department of Mathematics and Statistics, 925 Laver Building, 23 N Park Rd, Exeter EX4 4QD, United Kingdom}

\address[M.~Rasmussen]{Imperial College London,
635 Huxley Building, 180 Queen’s Gate, London SW7 2AZ, UK}
\email[N.~Jara]{ nestor.jara@usm.cl} 

\email[I.P.~Longo]{i.longo@exeter.ac.uk} 

\email[M.~Rasmussen]{m.rasmussen@imperial.ac.uk} 

\thanks{
Iacopo P.~Longo acknowledges partial support from UKRI under the grant agreement
EP/X027651/1, from  MICIIN/FEDER project PID2021-125446NB-I00 and from the University of
Valladolid under project PIP-TCESC-2020. }

\date{}

\begin{document}
	
	\begin{abstract}
We prove that dichotomies given by growth rates that are either faster or slower than exponential either do not occur or are inconsequential in the setting of skew-products with compact base. A similar conclusion is obtained for the nonuniform exponential behavior. To achieve this, we study families of translated linear nonautonomous differential equations for which we prove propagation results. We also study translations of growth rates under a comparison criteria. 



	\end{abstract}
	
	\subjclass[2020]{Primary: 	37C60. Secondary: 	37D25,	37C25}

	
	\keywords{Growth rates, Skew-product, Nonexponential behavior}

	\maketitle

\section{Introduction}

The study of linear nonautonomous differential equations has long been guided by characterizing exponential dichotomies and bounded growth \cite{Perron,Siegmund2002,Siegmund2}. These concepts are particularly effective in classical contexts like periodic and almost periodic systems, where tools such as Floquet theory \cite{Floquet,Russel} provide a deep understanding of long-term behavior. However, outside of these well-structured regimes, the dynamics can be significantly more complex, since time dependent systems can exhibit nonuniform or nonexponential growth and decay.

\smallskip

A central framework for studying such systems is the theory of skew-product flows, which allow nonautonomous dynamics to be embedded in an autonomous flow on a product space of the form $\Omega\times\mathbb{R}^N$, where the evolution in the fiber depends continuously on the base point in $\Omega$, and the set $\Omega$ is typically a collection of uniformly continuous and bounded functions \cite{Sacker,Sacker2} or almost periodic \cite{Fink}, and has later been expanded to collections of locally integrable maps endowed with a certain topology \cite{Artstein1,Artstein2,Longo,Longo2,Longo5}.

\smallskip

In recent years, research has moved beyond the classical exponential dichotomy to explore more general growth regimes. A notable generalization is the concept of nonuniform exponential dichotomy, introduced by Barreira and Valls \cite{BV-CMP, BV-JDE}. This framework has played a central role in the study of nonuniform hyperbolicity and has found applications in the analysis of functional, differential, and difference equations \cite{DZZ, DZZ-PL,LOO}. 

\smallskip

In parallel, the study of dichotomies governed by nonexponential growth behaviors has emerged as another line of generalization. To the best of our knowledge, the earliest contributions in this direction are due to Pinto and Naulín \cite{PN1, PN2, PN3}, whose work initiated a line of research that has continued to develop and has been applied in various contexts \cite{Bento, ZFY}, including investigations into polynomial behavior \cite{Dragicevic5}. A related approach is found in the work of Pötzsche \cite{Potzsche}, where the author introduces the notion of generalized exponential functions.

\smallskip

More recently, Silva \cite{Silva} proposed a unifying framework via the concept of nonuniform $\mu$-dichotomy—which includes the nonuniform exponential dichotomy as a special case—where, $\mu$ is a so-called growth rate (Def.~\ref{620}). This broader perspective has informed subsequent developments in \cite{GJ2}. These approaches provide a more general setting for analyzing stability and instability phenomena and naturally lead to the definition of a dichotomy spectrum, which generalizes the classical concept introduced in \cite{Siegmund2002}.

\smallskip

While the classical notion of exponential dichotomy has been extensively studied in the skew-product setting \cite{Dueñas, Longo}, to the best of our knowledge, the generalized notions of nonautonomous hyperbolicity discussed above—though developed in both discrete and continuous time settings, and in both finite- and infinite-dimensional spaces—have not yet been applied to the study of skew-product dynamics. In particular, regarding the spectral perspective, these generalizations have extended the dichotomy spectrum for nonautonomous behavior as described in \cite{Siegmund2002}, but have not incorporated the classical Sacker–Sell spectrum \cite{Sacker} for skew-products within their frameworks.

\smallskip

The present work aims to clarify how generalized notions of hyperbolicity interact with the structure of skew-products. Our main conclusion is that a spectral characterization of a given equation can be extended to its entire hull \textbf{if and only if} the system admits a uniform exponential dichotomy. In contrast, for a rather large class of nonexponential or nonuniform behaviors (see Rem.~\ref{extend}), such an extension is only possible along a single orbit. In other words, our findings indicate that the results in \cite{Dueñas, Longo, Sacker, Longo5} concerning exponential dichotomies on invariant sets are optimal with respect to the growth rate and are uniquely characteristic of uniform dichotomies. This means that nonuniform or nonexponential dichotomies and growth rates cannot, in general, be extended to larger invariant sets. This conclusion follows from Corollaries \ref{605}, \ref{617}, and \ref{616}.

\subsection{Structure of the article}
In Section \ref{dynamic} we introduce the concepts of dichotomy (Def.~\ref{99}) and bounded growth (Def.~\ref{134}) for families of translated equations. These notions capture both nonexponential and nonuniform dynamics. We demonstrate that these notions support propagation results (Lem.~\ref{44} \& \ref{101}); that is, when a given equation exhibits dichotomy or bounded growth, then the entire family of its translations inherits this behavior. In Subsection \ref{dynaspectrum} we define the spectrum associated with this notion of dichotomy and conclude that the spectrum of the family coincides with the dichotomy spectrum of each individual equation (Thm.~\ref{126}).

\smallskip

Section \ref{comparison} describes the behavior of translated growth rates. We define the slow and fast growth rates (Def.~\ref{608}), and prove that the point-wise limit of their translations are no longer growth rates (Lem.~\ref{401} \& \ref{402}). This leads to the conclusion that, in general, dichotomies and bounded growth for families of equations cannot be described by a unified growth rate. When such a unification is possible, it corresponds to the classical exponential case (Cor.~\ref{403}).

\smallskip

In Section \ref{behavior} we present the main conclusions of this work. We begin by introducing the skew-product formalism in Subsection \ref{skew}. Finally, in Section \ref{limit}, we apply the developments of the previous sections to analyze the behavior of limit points in the skew-product setting. Our main findings prove that for slow growth rates, limit points can never exhibit dichotomy (Thm.~\ref{601}), while  for fast growth rates, we prove that limit points do not exist (Thm.~\ref{602}), and consequently, such rates cannot occur when the base space is compact (Cor.~\ref{611}). In subsection \ref{periodic} for both cases we demonstrate that nonexponential behavior is, in a sense, incompatible with periodic or almost periodic functions (Prop.~\ref{128} \& \ref{613}). Similarly, for nonuniform exponential behavior we prove limiting equations do not exists (Cor.~\ref{616}).

\section{Propagation results for families of translated systems}\label{dynamic}

Consider a locally integrable map $A:\mathbb{R}\to \mathbb{R}^{N\times N}$ and the equation
\begin{equation}\label{01}
    \dot{x}=A(t)x(t),\qquad\forall\,t\in \mathbb{R},
\end{equation}
with evolution operator $\Phi:\mathbb{R}^2\to \mathbb{R}^{N\times N}$. For $\tau\in\mathbb{R}$, we define the \textbf{$\tau$-translation} of $A$, given by $A_\tau(t)=A(t+\tau)$, with which we define the translated equation from \eqref{01} as
\begin{equation}\label{00}
    \dot{x}=A_\tau(t)x(t),\qquad\forall\,t\in \mathbb{R}.
\end{equation}

Note that for a fixed $\tau\in \mathbb{R}$, the evolution operator for \eqref{00} is given by $\Phi_\tau(t,s)=\Phi(t+\tau,s+\tau)$ for every $t,s\in \mathbb{R}$. In particular, $\Phi_0=\Phi$.

\smallskip

Through this section we will consider all three: the original equation \eqref{01}, the translation of it by a fixed $\tau$ given by \eqref{00}, and also the family of all these translations, denoted by $\F$. The goal of this section is to prove that dynamical behavior (dichotomy, bounded growth, spectrum) for equation \eqref{01} is equivalent to the same behavior for the complete family of equations $\F$.

\smallskip

In order to study the different behaviors that the solutions to these equations may present, consider the following definition, adapted from \cite[p.~621]{Silva}.

\begin{definition}\label{620}
   We say a function $\mu:\mathbb{R}\to \mathbb{R}^+$ is a \textbf{growth rate} if it is strictly increasing, differentiable and verifies $\mu(0)=1$, $\lim_{t\to +\infty}\mu(t)=+\infty$ and $\lim_{t\to-\infty}\mu(t)=0$.
\end{definition}

To explore some examples, denote the sign of $t\in \mathbb{R}$ by $\sgn(t)$. The map $t\mapsto \exp(t)=e^t$ defines the \textbf{exponential} growth rate, while $p:\mathbb{R}\to\mathbb{R}^+$, $p(t)=(|t|+1)^{\sgn(t)}$ is the \textbf{polynomial} rate. 

\smallskip

We also define the family of \textbf{superexponential} growth rates, for $r\in (1,+\infty)$, as the growth rate given by $t\mapsto \seg_r(t)=e^{\sgn(t)\cdot |t|^r}$. In this family we have, for instance, the map $\qeg=\seg_2:\mathbb{R}\to \mathbb{R}^+$, $\qeg(t)=e^{\sgn(t)t^2}$, which we call the \textbf{quadratic} growth rate, and the map $\ceg=\seg_3:\mathbb{R}\to \mathbb{R}^+$, $\ceg(t)=e^{t^3}$, called the \textbf{cubic} rate.

\smallskip

On the other hand, we have the \textbf{subexponential} growth rates, for $r\in (0,1)$,  as the growth rate given by $t\mapsto \ueg_r(t)=e^{\sgn(t)\cdot \left((|t|+1)^r-1\right)}$. In this family we have, for instance, the map $t\mapsto e^{\sgn(t)(\sqrt{|t|+1}-1)}$, which we call the \textbf{square root} rate.

\smallskip

Given a growth rate $\mu$ and $\tau\in \mathbb{R}$, we can consider the map $\mu_\tau:\mathbb{R}\to \mathbb{R}^+$ given by $\mu_\tau(t)=\frac{\mu(\tau+t)}{\mu(\tau)}$. This is a new growth rate called the \textbf{$\tau$-translated} growth rate.  It is immediate that $(\mu_\tau)_{\hat{\tau}}=\mu_{\tau+\hat{\tau}}$ for every $\tau,\hat{\tau}\in \mathbb{R}$. Note that every translation from the exponential rate is still the same exponential rate, and moreover, the exponential rate, and its powers, are the only rates with this property.

\smallskip

The following concept \cite[p.~621]{Silva}, illustrates how the notion of growth rates enables us to generalize previously studied notions of hyperbolicity within the nonautonomous framework, {\it i.e.}, exponential dichotomy \cite{Perron,Siegmund2002}, and to encompass other nonuniform \cite{BV} or nonexponential behaviors that may arise in this context, such as polynomial decay \cite{Dragicevic5}, among others. 

\begin{definition}\label{dichotomynormal}
   Let $\mu:\mathbb{R}\to \mathbb{R}$ be a growth rate. We say equation \eqref{01} has \textbf{nonuniform $\mu$-dichotomy} $(\NmuD)$, if there exist an invariant projector $s\mapsto \P(s)\in \mathbb{R}^{N\times N}$, that is $$\P(t)\Phi(t,s)=\Phi(t,s)\P(s),\qquad\forall\,t,s,\in \mathbb{R},$$
   and constants $K\geq 1$, $\alpha<0$, $\beta>0$ and $\theta,\nu\geq 0$ such that $\alpha+\theta<0$, $\beta-\nu>0$ and 
       \[\left\{ \begin{array}{lc}
            \norm{\Phi(t,s)\P(s)}&\leq K\left(\frac{\mu(t)}{\mu(s)}\right)^\alpha\mu(s)^{\sgn(s)\theta}\,\,\,\quad\text{ for }t\geq s,\nonumber\\
            \norm{\Phi(t,s)[\Id-\P(s)]}&\leq K\left(\frac{\mu(t)}{\mu(s)}\right)^\beta\mu(s)^{\sgn(s)\nu}\qquad\text{ for }t\leq s.
            \end{array}
           \right.\]
           If $\theta = \nu = 0$, then we say \eqref{01} admits uniform \textbf{$\mu$-dichotomy} $(\muD)$.
\end{definition}

In the case of $\NmuD$, the above estimates are referred to as a $\Nmu$-dichotomy with \textbf{parameters} $(\P; K, \alpha, \beta, \theta, \nu)$, whereas for $\muD$, they are referred to as a $\mu$-dichotomy with parameters $(\P; K, \alpha, \beta)$. Note that if $\P \equiv \Id$, then the parameters $\beta$ and $\nu$ are omitted; similarly, if $\P \equiv 0$, the parameters $\alpha$ and $\theta$ are not included. In both cases, we retain the notation and indicate the missing parameter(s) with an asterisk ($*$).

\smallskip

An immediate but important observation is that if $s\mapsto \P(s)$ is an invariant projector for \eqref{01}, then $s\mapsto \P_\tau(s):=\P(s+\tau)$ is an invariant projector for the $\tau$-translated equation \eqref{00}. The following result shows how a dichotomy for equation \eqref{01} can be propagated to the translated equation \eqref{00}.

\begin{lemma}\label{44}
If \eqref{01} has $\Nmu$-dichotomy with parameters $(\P; K,\alpha, \beta,\theta,\nu)$, then, for every $\tau\in \mathbb{R}$, the system  \eqref{00} has $\Nmu_\tau$-dichotomy with parameters $(\P_\tau;K_\tau, \alpha, \beta,\theta,\nu)$, where $\mu_\tau$ is the $\tau$-translated growth rate, the invariant projector is $s\mapsto \P_\tau(s):=\P(s+\tau)$ and
\begin{equation}\label{60}
    K_\tau:=K\cdot \mu(\tau)^{3\sgn(\tau)\cdot\max\{\theta,\nu\}}.
\end{equation}
\end{lemma}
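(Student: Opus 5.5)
The plan is to verify the two estimates of Definition~\ref{dichotomynormal} for \eqref{00} by a change of variables, using only the algebraic identity $\mu(x+\tau)=\mu_\tau(x)\,\mu(\tau)$ and a triangle inequality for the logarithm of $\mu$. First I will check the projector. If $\P$ is invariant for \eqref{01}, then using $\Phi_\tau(t,s)=\Phi(t+\tau,s+\tau)$ we get
$\P_\tau(t)\Phi_\tau(t,s)=\P(t+\tau)\Phi(t+\tau,s+\tau)=\Phi(t+\tau,s+\tau)\P(s+\tau)=\Phi_\tau(t,s)\P_\tau(s)$.
So $\P_\tau$ is an invariant projector for \eqref{00}.

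Next, fix $t\ge s$ and apply the stable estimate of the hypothesis at the times $t+\tau\ge s+\tau$:
\[
\norm{\Phi_\tau(t,s)\P_\tau(s)}\le K\left(\frac{\mu(t+\tau)}{\mu(s+\tau)}\right)^{\alpha}\mu(s+\tau)^{\sgn(s+\tau)\theta}.
\]
The quotient equals $\mu_\tau(t)/\mu_\tau(s)$, because the factor $\mu(\tau)$ cancels. Hence it only remains to control the nonuniform factor $\mu(s+\tau)^{\sgn(s+\tau)\theta}$ by a multiple of $\mu_\tau(s)^{\sgn(s)\theta}$. For $t\le s$ the unstable estimate is treated identically, with $\beta,\nu$ in place of $\alpha,\theta$.

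The key step, and the only one requiring care, is this comparison of nonuniform factors. I will rewrite them using the observation that, for a growth rate $\mu$ (positive, increasing, $\mu(0)=1$), the sign of $\log\mu(x)$ is $\sgn(x)$. Therefore
\[
\mu(x)^{\sgn(x)}=e^{|\log\mu(x)|}\ge 1,
\]
and the same holds for $\mu_\tau$, which is also a growth rate. Taking logarithms in $\mu(s+\tau)=\mu_\tau(s)\mu(\tau)$ and applying the triangle inequality gives
\[
|\log\mu(s+\tau)|\le|\log\mu_\tau(s)|+|\log\mu(\tau)|,
\]
that is,
\[
\mu(s+\tau)^{\sgn(s+\tau)}\le \mu_\tau(s)^{\sgn(s)}\,\mu(\tau)^{\sgn(\tau)}.
\]
Raising this to the power $\theta\ge0$ (respectively $\nu\ge 0$) yields the desired bound with constant $K\mu(\tau)^{\sgn(\tau)\theta}$ (respectively $K\mu(\tau)^{\sgn(\tau)\nu}$).

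Finally, since $\mu(\tau)^{\sgn(\tau)}\ge1$, both of these constants are bounded by
\[
K\mu(\tau)^{\sgn(\tau)\max\{\theta,\nu\}}\le K\mu(\tau)^{3\sgn(\tau)\max\{\theta,\nu\}}=K_\tau.
\]
The exponents $\alpha,\beta,\theta,\nu$ are unchanged, so the constraints $\alpha+\theta<0$ and $\beta-\nu>0$ still hold, and \eqref{00} has a $\Nmu_\tau$-dichotomy with parameters $(\P_\tau;K_\tau,\alpha,\beta,\theta,\nu)$. The degenerate cases $\P\equiv\Id$ or $\P\equiv0$ need no separate treatment, since only one of the two estimates is then checked. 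The main obstacle is thus the sign bookkeeping in the factor $\mu(\cdot)^{\sgn(\cdot)\theta}$. The logarithmic reformulation turns the case split over the signs of $s$, $\tau$ and $s+\tau$ into a single triangle inequality, and it gives a constant better than the one stated.
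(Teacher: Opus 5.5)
Your proposal is correct and follows the same route as the paper: translate the two estimates, cancel $\mu(\tau)$ in the quotient $\mu(t+\tau)/\mu(s+\tau)=\mu_\tau(t)/\mu_\tau(s)$, and bound the ratio $\mu(s+\tau)^{\sgn(s+\tau)}/\mu_\tau(s)^{\sgn(s)}$ by a power of $\mu(\tau)^{\sgn(\tau)}\geq 1$. The only difference is in that last estimate: the paper splits the ratio into the factors $\mu(\tau)^{\sgn(s)}$ and $\mu(s+\tau)^{\sgn(s+\tau)-\sgn(s)}$ and obtains exponent $3$, whereas your identity $\mu(x)^{\sgn(x)}=e^{|\log\mu(x)|}$ combined with the triangle inequality gives exponent $1$, which is sharper and still yields the stated $K_\tau$.
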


\begin{proof}
We have
\begin{itemize}
    \item $\Phi_\tau(t,s)\P_\tau(s)=\Phi_\tau(t+\tau,s+\tau)\P(s+\tau)$,
    \item $\Phi_\tau(t,s)[\Id-\P_\tau(s)]=\Phi(t+\tau,s+ \tau)[\Id-\P(s+\tau)]$,
\end{itemize}
thus, as \eqref{01} has $\NmuD$ with parameters $(\P; K,\alpha, \beta,\theta,\nu)$, we obtain
    \begin{equation*}
        \left\{ \begin{array}{lc}
            \norm{\Phi_\tau(t,s)\P_\tau(s)}&\leq K\left(\frac{\mu(t+\tau)}{\mu(s+\tau)}\right)^\alpha\mu(s+\tau)^{\sgn(s+\tau)\theta}\,\,\,\quad\text{ for } t\geq s,\nonumber\\
            \norm{\Phi_\tau(t,s)[\Id-\P_\tau(s)]}&\leq K\left(\frac{\mu(t+\tau)}{\mu(s+\tau)}\right)^\beta\mu(s+\tau)^{\sgn(s+\tau)\nu}\qquad\text{ for }t\leq s.
            \end{array}
           \right.
    \end{equation*}

Observe that for any $a,b\in \mathbb{R}$ we have
\begin{align*}
    \left(\frac{\mu(t+\tau)}{\mu(s+\tau)}\right)^a\mu(s+\tau)^{\sgn(s+\tau)b}&=\left(\frac{\mu_\tau(t)}{\mu_\tau(s)}\right)^a\mu_\tau(s)^{\sgn(s)b}\left(\frac{\mu(s+\tau)^{\sgn(s+\tau)}}{\mu_\tau(s)^{\sgn(s)}}\right)^b.
\end{align*}

On the other hand, note that
\begin{align*}
    \frac{\mu(s+\tau)^{\sgn(s+\tau)}}{\mu_\tau(s)^{\sgn(s)}}    =\frac{\mu(s+\tau)^{\sgn(s+\tau)}\mu(\tau)^{\sgn(s)}}{\mu(s+\tau)^{\sgn(s)}}&\leq\mu(\tau)^{\sgn(\tau)}\cdot\mu(s+\tau)^{\sgn(s+\tau)-\sgn(s)}\\
    &\leq \mu(\tau)^{3\sgn(\tau)}.
\end{align*}

Therefore, defining $K_\tau$ as in \eqref{60} we have
    \begin{equation*}
        \left\{ \begin{array}{lc}
            \norm{\Phi_\tau(t,s)\P_\tau(s)}&\leq K_\tau\left(\frac{\mu_\tau(t)}{\mu_\tau(s)}\right)^\alpha\mu_\tau(s)^{\sgn(s)\theta}\,\,\,\quad\text{ for }t\geq s,\nonumber\\
            \norm{\Phi_\tau(t,s)[\Id-\P_\tau(s)]}&\leq K_\tau\left(\frac{\mu_\tau(t)}{\mu_\tau(s)}\right)^\beta\mu_\tau(s)^{\sgn(s)\nu}\qquad\text{ for }t\leq s,
            \end{array}
           \right.
    \end{equation*}
{\it i.e.}, the equation \eqref{00} has $\Nmu_\tau$-dichotomy with parameters $(\P_\tau;K_\tau,\alpha,\beta,\theta,\nu)$.
\end{proof}

\begin{remark}\label{49}
{\rm
In the previous result, $K_\tau$ varies continuously with respect to $\tau \in \mathbb{R}$. Moreover, if the dichotomy for the system \eqref{01} is uniform—that is, if $\theta = \nu = 0$—then the dichotomy for \eqref{00} is also uniform, and in this case $K_\tau= K$ for all $\tau \in \mathbb{R}$.
    }
\end{remark}

The following definition aims to emulate how the use exponential dichotomy has been used to describe the behavior of families of translated systems, but instead employs the concept of growth rates and takes into account the conclusion of Lem.~\ref{44}. 

\begin{definition}\label{99}
We say that the \textbf{family of equations $\F$ admits nonuniform $\mu$-dichotomy} $(\NmuD)$, if there is an invariant projector for \eqref{01} given by $s\mapsto \P(s)$, a continuous map $K:\mathbb{R}\to [1,+\infty), \tau\mapsto K_\tau$, and constants $\alpha<0$, $\beta>0$ and $\theta,\nu\geq 0$ verifying $\alpha+\theta<0$ and $\beta-\nu>0$ such that for every $\tau\in \mathbb{R}$ we have
       \[\left\{ \begin{array}{lc}
            \norm{\Phi_\tau(t,s)\P_\tau(s)}&\leq K_\tau\left(\frac{\mu_\tau(t)}{\mu_\tau(s)}\right)^\alpha\mu_\tau(s)^{\sgn(s)\theta}\,\,\,\quad\text{ for }t\geq s,\nonumber\\
            \norm{\Phi_\tau(t,s)[\Id-\P_\tau(s)]}&\leq K_\tau\left(\frac{\mu_\tau(t)}{\mu_\tau(s)}\right)^\beta\mu_\tau(s)^{\sgn(s)\nu}\qquad\text{ for }t\leq s.
            \end{array}
           \right.\]
            If $\theta=\nu=0$ and $K$ is constant, then we say \textbf{the family of equations $\F$ admits uniform $\mu$-dichotomy} $(\muD)$.
\end{definition}

We maintain the notation of parameters for dichotomies of families of equations, except that in this context $K$ is a map rather than a constant. An immediate corollary follows from Lem.~\ref{44} and Rem.~\ref{49}.

\begin{corollary}\label{200}
The equation \eqref{01} has $\NmuD$ (resp. $\muD$) if and only if the family of equations $\F$ has $\NmuD$ (resp. $\muD$).
\end{corollary}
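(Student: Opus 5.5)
The plan is to prove the two implications separately, treating the nonuniform and uniform cases in parallel.

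\emph{Forward direction.} Assume \eqref{01} has $\NmuD$ with parameters $(\P;K,\alpha,\beta,\theta,\nu)$. Define the map $\tau\mapsto K_\tau$ by \eqref{60}, with the invariant projector $\P$ and the same constants $\alpha,\beta,\theta,\nu$. Lemma~\ref{44} gives, for each fixed $\tau$, exactly the two estimates required in Definition~\ref{99}. The remaining requirements are checked as follows.
\begin{itemize}
\item The map $K_\tau$ takes values in $[1,+\infty)$. Indeed, $\mu(\tau)\ge 1$ when $\tau\ge 0$ and $\mu(\tau)\le 1$ when $\tau\le 0$, so $\mu(\tau)^{3\sgn(\tau)\max\{\theta,\nu\}}\ge 1$, and $K\ge 1$.
\item The map is continuous, as noted in Remark~\ref{49}. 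Since $\mu$ is continuous and positive, the only point to check is $\tau=0$. There the exponent's sign jumps, but $\mu(0)=1$, so both one-sided limits equal $K$.
\end{itemize}
For the uniform case, $\theta=\nu=0$ gives $K_\tau\equiv K$ by Remark~\ref{49}. This yields $\muD$ for the family.

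\emph{Converse direction.} Assume the family $\F$ admits $\NmuD$ with data $(\P;K_\cdot,\alpha,\beta,\theta,\nu)$. Specialize the defining inequalities to $\tau=0$. Since $\Phi_0=\Phi$, $\P_0=\P$ and $\mu_0=\mu$ (because $\mu(0)=1$), these inequalities are precisely the $\Nmu$-dichotomy estimates for \eqref{01}, with constant $K_0\ge 1$. The projector $\P$ is already assumed invariant for \eqref{01}, and the constraints $\alpha+\theta<0$ and $\beta-\nu>0$ carry over unchanged. If the family has $\muD$, then $\theta=\nu=0$ and we obtain $\muD$ for \eqref{01}.

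There is no real obstacle here. The only point that needs care is confirming that $K_\tau$ in \eqref{60} is continuous and at least $1$ across $\tau=0$, which reduces to $\mu(0)=1$ and the monotonicity of $\mu$.
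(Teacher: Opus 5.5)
Your proposal is correct and follows the paper's argument. The forward direction is Lemma~\ref{44} together with Remark~\ref{49}, and the converse is the specialization $\tau=0$ using $\mu_0=\mu$, $\Phi_0=\Phi$ and $\P_0=\P$; you additionally spell out that $K_\tau\geq 1$ and that $K_\tau$ is continuous at $\tau=0$, which the paper leaves implicit.
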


The notions of dichotomy can be interpreted as measures of how rapidly the evolution of a system contracts or expands. The following concept, adapted from \cite[p.~630]{Silva}, may be regarded as a complementary notion, capturing how slowly the system's evolution undergoes contraction or expansion

\begin{definition}
	Consider a growth rate $\mu$.	We say \eqref{01} has \textbf{nonuniform $\mu$-bounded growth}, or just $\Nmu$-growth, if there are constants $L\geq 1$, $a>0$ and $\epsilon\geq0$ such that
\begin{equation*}
    		\|\Phi(t,s)\|\leq L\left(\frac{\mu(t)}{\mu(s)}\right)^{\sgn(t-s)a}\mu(s)^{\sgn(s)\epsilon},\quad\forall\,t,s\in \mathbb{R}.
\end{equation*}
Moreover, if $\epsilon=0$, we say\eqref{01} has \textbf{uniform $\mu$-bounded growth}, or just $\mu$-growth.
	\end{definition}

    The next result gives an analogous conclusion to Lem.~\ref{44} for the concept of $\Nmu$-growth.

\begin{lemma}\label{101}
    Suppose \eqref{01} has $\Nmu$-growth with constants $L\geq 1$, $a>0$ and $\epsilon\geq 0$. Then, for every $\tau\in \mathbb{R}$, the system \eqref{00} has $\Nmu_\tau$-growth with constants $L_\tau:=L\cdot\mu(\tau)^{3\sgn(\tau)\epsilon}$, $a>0$ and $\epsilon\geq 0$, where $\mu_\tau$ is the $\tau$-translated growth rate.
\end{lemma}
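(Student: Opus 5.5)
The plan is to mirror the proof of Lem.~\ref{44} almost verbatim, since the bounded growth estimate has exactly the same structure as the dichotomy estimates: a ratio of growth-rate values raised to some power, times a nonuniform correction $\mu(s)^{\sgn(s)\epsilon}$. We begin from $\Phi_\tau(t,s)=\Phi(t+\tau,s+\tau)$ and apply the $\Nmu$-growth hypothesis for \eqref{01} at the pair $(t+\tau,s+\tau)$. This gives
\[
\|\Phi_\tau(t,s)\|\leq L\left(\frac{\mu(t+\tau)}{\mu(s+\tau)}\right)^{\sgn(t-s)a}\mu(s+\tau)^{\sgn(s+\tau)\epsilon},
\]
where we have used $\sgn((t+\tau)-(s+\tau))=\sgn(t-s)$.

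Next we rewrite everything in terms of $\mu_\tau$. The ratio is translation-invariant: $\mu(t+\tau)/\mu(s+\tau)=\mu_\tau(t)/\mu_\tau(s)$, because the factor $\mu(\tau)$ cancels. So the only term needing care is the nonuniform factor. As in the proof of Lem.~\ref{44} (with $b=\epsilon$), we write
\[
\mu(s+\tau)^{\sgn(s+\tau)\epsilon}=\mu_\tau(s)^{\sgn(s)\epsilon}\left(\frac{\mu(s+\tau)^{\sgn(s+\tau)}}{\mu_\tau(s)^{\sgn(s)}}\right)^{\epsilon}.
\]
We then invoke the inequality already established there,
\[
\frac{\mu(s+\tau)^{\sgn(s+\tau)}}{\mu_\tau(s)^{\sgn(s)}}\leq \mu(\tau)^{3\sgn(\tau)}.
\]
Since $\epsilon\geq 0$, raising this to the power $\epsilon$ preserves the inequality. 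Multiplying through by $L$ gives the claim with $L_\tau=L\mu(\tau)^{3\sgn(\tau)\epsilon}$ and the same $a,\epsilon$.

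Two checks remain. First, $L_\tau\geq 1$: this holds because $\mu(\tau)^{\sgn(\tau)}\geq 1$ for every $\tau$, as $\mu$ is increasing with $\mu(0)=1$. Second, $\mu_\tau$ is indeed a growth rate, which was noted earlier in the paper.

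The only potentially delicate step is the sign bookkeeping in the key inequality, but it is exactly the one used in Lem.~\ref{44}, so I would cite it rather than redo it. If one prefers to recheck it, the argument is as follows. First, $\mu(\tau)^{\sgn(s)}\leq\mu(\tau)^{\sgn(\tau)}$, since $\mu(\tau)^{\sgn(\tau)}\geq1$ and $\mu(\tau)^{\pm\sgn(\tau)}\leq\mu(\tau)^{\sgn(\tau)}$. Second, $\mu(s+\tau)^{\sgn(s+\tau)-\sgn(s)}\leq\mu(\tau)^{2\sgn(\tau)}$; this bound is only needed when $s$ and $s+\tau$ lie on opposite sides of $0$, in which case $|s+\tau|\le|\tau|$.
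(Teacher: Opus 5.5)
Your proposal is correct and matches the paper's proof: apply the hypothesis at $(t+\tau,s+\tau)$, use that the ratio $\mu(t+\tau)/\mu(s+\tau)=\mu_\tau(t)/\mu_\tau(s)$ is unchanged by translation, and bound the nonuniform factor with the inequality $\mu(s+\tau)^{\sgn(s+\tau)}/\mu_\tau(s)^{\sgn(s)}\leq\mu(\tau)^{3\sgn(\tau)}$ from Lem.~\ref{44}. Your recheck of that inequality and of $L_\tau\geq 1$ adds detail the paper leaves implicit; the remaining cases where $s=0$ or $s+\tau=0$ are equally immediate.
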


\begin{proof}
    It is enough to note that 
    \begin{align*}
        \|\Phi_\tau(t,s)\|&\leq L\left(\frac{\mu(t+\tau)}{\mu(s+\tau)}\right)^{\sgn(t+\tau-(s+\tau))a}\mu(s+\tau)^{\sgn(s+\tau)\epsilon}\\
        &\leq L_\tau\left(\frac{\mu_\tau(t)}{\mu_\tau(s)}\right)^{\sgn(t-s)a}\mu_\tau(s)^{\sgn(s)\epsilon},\quad\forall\,t,s,\tau\in \mathbb{R},
    \end{align*}
    where the last inequality is deduced in the same fashion as in Lem.~\ref{44}.
\end{proof}

\begin{remark}\label{304}
    {\rm
    Once again, in the result above, $L_\tau$ varies continuously with respect to $\tau \in \mathbb{R}$. Moreover, if we fix $\epsilon = 0$, we recover the uniform growth case. 
    }
\end{remark}

The previous lemma motivates the following definition.

\begin{definition}\label{134}
Consider a growth rate $\mu$. We say that \textbf{the family of equations $\F$ has nonuniform $\mu$-bounded growth}, or just $\Nmu$-growth, if there exist a continuous function $L:\mathbb{R}\to [1,+\infty)$, $\tau\mapsto L_\tau$, and constants $a>0$, $\epsilon\geq 0$ such that 
\begin{equation*}
    		\|\Phi_\tau(t,s)\|\leq L_\tau\left(\frac{\mu_\tau(t)}{\mu_\tau(s)}\right)^{\sgn(t-s)a}\mu_\tau(s)^{\sgn(s)\epsilon},\quad\forall\,t,s,\tau\in \mathbb{R}.
\end{equation*}
Moreover, if $\epsilon=0$ and $L$ is constant, it is said that \textbf{the family of equations $\F$ has uniform $\mu$-bounded growth}, or just $\mu$-growth.
\end{definition}

Once again, an immediate corollary is deduced from Lem.~\ref{101} and Rem.~\ref{304}.

\begin{corollary}\label{201}
    The equation \eqref{01} has $\Nmu$-growth (resp. $\mu$-growth) if and only if the family of equations $\F$ has $\Nmu$-growth (resp. $\mu$-growth).
\end{corollary}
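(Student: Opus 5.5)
The equivalence in Corollary~\ref{201} has two directions, and I will treat them separately.

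For the forward implication, suppose \eqref{01} has $\Nmu$-growth with constants $(L,a,\epsilon)$. Lemma~\ref{101} applies for every $\tau\in\mathbb{R}$ and gives the estimate for $\Phi_\tau$ with rate $\mu_\tau$. The constants are $L_\tau=L\,\mu(\tau)^{3\sgn(\tau)\epsilon}$, together with the same $a$ and $\epsilon$. It then remains to check the requirements of Definition~\ref{134} on $\tau\mapsto L_\tau$:
\begin{itemize}
\item \emph{Range.} If $\tau\ge 0$ then $\mu(\tau)\ge 1$ and $\sgn(\tau)\ge 0$. If $\tau<0$ then $\mu(\tau)<1$ and $\sgn(\tau)=-1$. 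In both cases $\mu(\tau)^{3\sgn(\tau)\epsilon}\ge 1$, so $L_\tau\ge L\ge 1$.
\item \emph{Continuity.} At $\tau\neq 0$, continuity follows from the continuity of $\mu$ and the local constancy of $\sgn$. At $\tau=0$ the factor $\mu(\tau)^{\pm 3\epsilon}\to\mu(0)^{\pm 3\epsilon}=1$ from both sides, so there is no jump despite the sign function. This is the content of Remark~\ref{304}.
\item \emph{Uniform case.} If $\epsilon=0$, then $L_\tau\equiv L$ is constant, so the family has $\mu$-growth in the sense of Definition~\ref{134}.
\end{itemize}

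For the converse, suppose the family $\F$ has $\Nmu$-growth (resp. $\mu$-growth) in the sense of Definition~\ref{134}. Specialize to $\tau=0$. Since $\Phi_0=\Phi$ and $\mu_0(t)=\mu(t)/\mu(0)=\mu(t)$, the defining inequality at $\tau=0$ is exactly the $\Nmu$-growth estimate for \eqref{01}, with constants $L_0\ge 1$, $a$ and $\epsilon$. In the uniform case $\epsilon=0$, so this is $\mu$-growth.

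There is no real obstacle. The only point deserving care is the behavior of $\tau\mapsto L_\tau$ at $\tau=0$, where $\sgn$ is discontinuous; this is handled by $\mu(0)=1$ as noted above.
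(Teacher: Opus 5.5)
Your proposal is correct and follows the paper's route. The forward direction is Lemma~\ref{101} together with Remark~\ref{304}, which gives continuity of $\tau\mapsto L_\tau$ and constancy when $\epsilon=0$. The converse is the specialization $\tau=0$, using $\Phi_0=\Phi$ and $\mu_0=\mu$, and your explicit checks that $L_\tau\ge 1$ and that $L_\tau$ is continuous at $\tau=0$ (via $\mu(0)=1$) fill in details the paper leaves to that remark.
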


Let us illustrate these concepts with examples.

\begin{example}\label{550}
    {\rm Consider the family of equations
    \begin{equation}\label{55}
    \dot{x}=\frac{1}{1+|t+\tau|}x(t),\qquad\tau\in \mathbb{R}.
\end{equation}

For $\tau = 0$, the equation is $\dot{x} = \frac{1}{1 + |t|} x(t)$, whose evolution operator is $\Phi(t,s) = \frac{(1 + |t|)^{\sgn(t)}}{(1+|s|)^{\sgn(s)}}$. Therefore, it exhibits a uniform polynomial dichotomy with parameters $(0; 1, *, 1)$. Moreover, it also exhibits uniform polynomial growth with constants $L=1$ and $a=1$. Hence, by Cor.~\ref{200} and Cor.~\ref{201}, the family of equations \eqref{55} possesses both, polynomial dichotomy and growth.
    }
\end{example}

In the previous example, the notions of dichotomy and growth obtained for the family of systems require that the growth rate changes for each translated system. Later we discuss the possibility of writing this kind of behavior under a unified growth rate (see Cor.~\ref{403}). A similar conclusion is obtained in the following example.

\begin{example}\label{056}
    {\rm
   Consider the family of differential equations
\begin{equation}\label{56}
    \dot{x}=2|t+\tau|x(t),\qquad\tau\in \mathbb{R}.
\end{equation}
By a similar argument as the previous example, we can verify that this family has a uniform quadratic dichotomy, with parameters $(0;1,*,1)$.

\smallskip

Moreover, this family of systems also has uniform exponential dichotomy, and in the exponential case, for any $\beta>0$ there is some $K\geq 1$ such that the parameters can be chosen as $(0;K,*,\beta)$. Nevertheless, this family \eqref{56} does not present exponential growth, but only quadratic growth, with constants $L=1$ and $a=1$.
    }
\end{example}

Now we will see an instance of a nonuniform dichotomy for a family of systems in which the rate itself is preserved, but the parameters cannot be fix for all the equations.

\begin{example}\label{0560}
{\rm   Fix $0<3\eta<\lambda$.  Consider the family of differential equations
\begin{equation}\label{560}
    \dot{x}=-\left(\lambda+\eta(t+\tau)\sin(t+\tau)\right)x(t),\qquad\tau\in \mathbb{R}.
\end{equation}

For $\tau=0$, the equation \eqref{560} has nonuniform exponential dichotomy with parameters $(1;e^{2\eta},-\lambda+\eta,*,2\eta,*)$.  Similarly, it has nonuniform exponential bounded growth with constants $L=e^{2\eta}$, $a=\lambda+\eta$ and $\epsilon=2\eta$.

\smallskip

Therefore, by Lem.~\ref{44}, for every $\tau \in \mathbb{R}$, the equation \eqref{560} exhibits a nonuniform exponential dichotomy (since the translations of the exponential rate remain exponential) with parameters $(1; e^{6|\tau|\eta + 2\eta}, -\lambda + \eta, *, 2\eta, *)$.

\smallskip

Similarly, by Lem.~\ref{101},  for every $\tau \in \mathbb{R}$, the equation \eqref{560} exhibits a nonuniform exponential growth, with constants $L_\tau=e^{6|\tau|\eta + 2\eta}$, $a=\lambda+\eta$ and $\epsilon=2\eta$.

\smallskip

Note that there is no way to chose the same parameters for every equation on \eqref{560}, thus $K$ and $L$ must be non-constant maps. 
}
\end{example}

In the remainder of this section, we explore basic properties of $\NmuD$. To this end, let us recall some fundamental concepts. We denote by $\pi: \mathbb{R} \times \mathbb{R}^N \to \mathbb{R}$ the canonical projection, defined by $\pi(\tau, x) = \tau$. Given a subset $\W \subset \mathbb{R} \times \mathbb{R}^N$ and $\tau \in \pi\W$, we define the fiber of $\W$ over $\tau$ as $\W(\tau):=\{x\in \mathbb{R}^N : (\tau,x)\in \W\}.$

\smallskip

A subbundle is a subset $\W\subset\mathbb{R}\times\mathbb{R}^N$ verifying that for every $\tau\in \pi\W$, the fiber $\W(\tau)$ is a vector subspace of $\mathbb{R}^N$, and all these subspaces have the same dimension. A subbundle $\W\subset \mathbb{R}\times \mathbb{R}^N$ is called invariant if $\Phi(t,\tau)x\in \W$ for every $(\tau,x)\in \W$ and $t\in \mathbb{R}$. Given two subbundles $\W$, $\V$, we define their intersection and sum respectively as
$$\V\cap \W:=\{(\tau,x)\in \mathbb{R}\times\mathbb{R}^N:x\in \V(\tau)\cap\W (\tau)\},$$
$$\V+\W:=\{(\tau,x+y)\in \mathbb{R}\times\mathbb{R}^N:x\in \V(\tau)\land y\in \W(\tau)\}.$$
If, moreover, they satisfy $\V\cap \W=\pi\V\times\{0\}$, then write $\V+\W=\V\oplus\W$. The following are called respectively the stable, unstable and bounded invariant subbundles:
\begin{align}\label{33}
    \S&:=\left\{(\tau,x)\in \mathbb{R}\times \mathbb{R}^n: \sup_{t\geq 0}\norm{\Phi(t,\tau)x}< +\infty\right\},\nonumber\\
        \U&:=\left\{(\tau,x)\in \mathbb{R}\times \mathbb{R}^n: \sup_{t\leq 0}\norm{\Phi(t,\tau)x}< +\infty\right\},\\
                \B&:=\left\{(\tau,x)\in \mathbb{R}\times \mathbb{R}^n: \sup_{t\in \mathbb{R}}\norm{\Phi(t,\tau)x}< +\infty\right\}.\nonumber
\end{align}

On the other hand, given an invariant projector $\P : \mathbb{R} \to \mathbb{R}^{N \times N}$, we define its range and null space respectively as
\begin{align*}
    \rang\P&:=\{(\tau,x)\in \mathbb{R}\times \mathbb{R}^N: \P(\tau)x=x\},\\
    \ker\P&:=\{(\tau,x)\in \mathbb{R}\times \mathbb{R}^N: \P(\tau)x=0\}.
\end{align*}

The goal of the following result is to characterize how, in the presence of a dichotomy, the stable and unstable subbundles fully determine the family of projections. By employing standard techniques \cite{Siegmund2002}, the reader can prove this result as an exercise.

\begin{proposition}\label{300}
    Consider the family of systems $\F$, and suppose it has $\NmuD$ with parameters $(\P;K,\alpha,\beta,\theta,\nu)$. Then, we have
\begin{itemize}
        \item [i)] $\B=\mathbb{R}\times\{0\}$,
        \item [ii)] $\rang\P=\S$ and $\ker\P=\U$,
        \item [iii)] $\S\cap\U=\mathbb{R}\times\{0\}$ and  $\S\oplus\U=\mathbb{R}\times\mathbb{R}^N$. 
    \end{itemize}
\end{proposition}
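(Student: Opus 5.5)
By Cor.~\ref{200} it is enough to use the dichotomy of the single equation \eqref{01} with parameters $(\P;K,\alpha,\beta,\theta,\nu)$ (the $\tau=0$ member of the family). The sets $\S,\U,\B$ are defined through $\Phi$ alone, so nothing else is needed. I will prove ii) first; i) and iii) then follow formally.

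\emph{Step 1: $\rang\P\subset\S$ and $\ker\P\subset\U$.} Let $x=\P(\tau)x$. For $t\geq\tau$,
\[\norm{\Phi(t,\tau)x}\leq K\left(\frac{\mu(t)}{\mu(\tau)}\right)^{\alpha}\mu(\tau)^{\sgn(\tau)\theta}\norm{x}.\]
Since $\alpha<0$ and $\mu$ is increasing, $(\mu(t)/\mu(\tau))^\alpha\leq1$, so this quantity is bounded in $t\geq\tau$ and tends to $0$ as $t\to+\infty$. On the compact interval between $\tau$ and $0$, continuity of $\Phi$ gives boundedness. Hence $(\tau,x)\in\S$. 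Symmetrically, if $\P(\tau)x=0$, the estimate with $\beta>0$ for $t\leq\tau$ gives boundedness, and decay to $0$ as $t\to-\infty$ because $\mu(t)\to0$. Hence $(\tau,x)\in\U$.

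\emph{Step 2: $\S\subset\rang\P$.} This is the main step. Take $(\tau,x)\in\S$ and write $x=\P(\tau)x+y$ with $y=[\Id-\P(\tau)]x$. By Step 1, $(\tau,\P(\tau)x)\in\S$, so $(\tau,y)\in\S$ by linearity, and $\sup_{t\geq\tau}\norm{\Phi(t,\tau)y}=C<\infty$. Put $z_t=\Phi(t,\tau)y$. By invariance of the projector, $z_t\in\ker\P(t)$. The unstable estimate from $s=t$ back to $\tau\leq t$ gives
\[\norm{y}=\norm{\Phi(\tau,t)[\Id-\P(t)]z_t}\leq K\left(\frac{\mu(\tau)}{\mu(t)}\right)^{\beta}\mu(t)^{\sgn(t)\nu}C.\]
For $t>0$ the right-hand side equals $KC\mu(\tau)^\beta\mu(t)^{-(\beta-\nu)}$. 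This tends to $0$ as $t\to+\infty$, because $\beta-\nu>0$ and $\mu(t)\to\infty$. This is exactly where the nonuniformity conditions enter. So $y=0$ and $x\in\rang\P(\tau)$.

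\emph{Step 3: $\U\subset\ker\P$.} The argument is symmetric. For $(\tau,x)\in\U$, let $w=\P(\tau)x$, so that $(\tau,w)\in\U$. Put $w_t=\Phi(t,\tau)w$ for $t\leq\tau$; then $w_t\in\rang\P(t)$. The stable estimate gives
\[\norm{w}\leq K\left(\frac{\mu(\tau)}{\mu(t)}\right)^{\alpha}\mu(t)^{\sgn(t)\theta}C.\]
For $t<0$ this equals $KC\mu(\tau)^\alpha\mu(t)^{-(\alpha+\theta)}$. It tends to $0$ as $t\to-\infty$, because $\mu(t)\to0$ and $-(\alpha+\theta)>0$. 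Hence $w=0$ and $x\in\ker\P(\tau)$.

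\emph{Conclusion.} Steps 1--3 give ii). Since $\P(\tau)$ is a projector, $\rang\P(\tau)\cap\ker\P(\tau)=\{0\}$ and $\rang\P(\tau)+\ker\P(\tau)=\mathbb{R}^N$. Combined with ii), this gives iii); each fiber has constant dimension because $\Phi(t,\tau)$ is an isomorphism mapping fibers to fibers. Finally, $\B=\S\cap\U$ directly from the definitions in \eqref{33}, which gives i). The only delicate points are the sign bookkeeping in $\mu(t)^{\sgn(t)\nu}$, handled by restricting to $t>0$ (resp. $t<0$), and the use of $\beta-\nu>0$ and $\alpha+\theta<0$ in Steps 2 and 3.
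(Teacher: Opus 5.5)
Your proof is correct and is the standard Siegmund-type argument the paper has in mind; the paper writes out no proof and leaves the result as an exercise via standard techniques. The estimates give $\rang\P\subset\S$ and $\ker\P\subset\U$. The reverse inclusions come from transporting the complementary component along a bounded orbit, where $\beta-\nu>0$ and $\alpha+\theta<0$ are exactly what force $\mu(t)^{-(\beta-\nu)}\to 0$ and $\mu(t)^{-(\alpha+\theta)}\to 0$. Parts i) and iii) then follow from $\B=\S\cap\U$ and the idempotency of each $\P(\tau)$.
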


\subsection{Dichotomy spectrum}\label{dynaspectrum}

In this subsection we explore the spectral structure of dichotomies for families of translated systems. Fix a locally integrable map $A:\mathbb{R}\to \mathbb{R}^{N\times N}$ and a growth rate $\mu$. Then, for  $\gamma\in \mathbb{R}$ consider the \textbf{$(\mu,\gamma)$-shifted equation} from \eqref{01}
\begin{equation}\label{001}
\dot{y}=\left(A(t)-\gamma\,\frac{\dot{\mu}(t)}{\mu(t)}\,\Id\right)y(t),\quad t\in \mathbb{R},
\end{equation}
as well as the $\tau$-translated equation from \eqref{001}
\begin{equation}\label{003}
        \dot{y}=\left(A_\tau(t)-\gamma\,\frac{\dot{\mu_\tau}(t)}{\mu_\tau(t)}\,\Id\right)y(t),\quad t\in \mathbb{R}.
 \end{equation}

Analogously as before, we will consider the family of all such $\tau$-translated equations, denoted by $\FT$. We denote the evolution operator od \eqref{001} by $\Phi_\tau^{\mu,\gamma}(\cdot,\cdot)$.

\smallskip

Following the convention introduced in \cite{Rasmussen,Rasmussen2} we say that the family of systems $\FT$ has $\NmuD$ for $\gamma=+\infty$ if there are $\widehat{\gamma}\in\mathbb{R}$, $\alpha<0$, $\theta\geq 0$ and $K:\mathbb{R}\to[1,+\infty)$ such that the family of equations $\FT$ has $\NmuD$ with parameters $(\Id;K,\alpha,*,\theta,*)$. Analogously, we say that the family $\FT$ has $\NmuD$ for $\gamma=-\infty$ if there are $\widehat{\gamma}\in\mathbb{R}$, $\beta>0$, $\nu\geq 0$ and $K:\mathbb{R}\to [1,+\infty)$ such that the the family of equations $\FT$ has $\NmuD$ with parameters $(0;K,*,\beta,*,\nu)$. An analogous convention applies for the single equation \eqref{001}. In order to present the following definitions, we introduce the extended real line $\overline{\mathbb{R}}=\mathbb{R}\cup\{+\infty,-\infty\}$ with its usual topology and order.

\begin{definition}
The \textbf{nonuniform $\mu$-dichotomy spectrum of the family of systems $\F$} is defined as
$$\Sigma_\NmuD(A_{\tau\in \mathbb{R}}):=\{\gamma\in \overline{\mathbb{R}}: \text{the family }\FT\text{ does not have } \NmuD\},$$
and $\rho_\NmuD(A_{\tau\in \mathbb{R}}):=\overline{\mathbb{R}}\setminus\Sigma_\NmuD(A_{\tau\in \mathbb{R}})$ is the \textbf{nonuniform $\mu$-resolvent set}.
\end{definition}

Similarly, we define the \textbf{uniform $\mu$-dichotomy spectrum} $\Sigma_\muD(A_{\tau\in \mathbb{R}})$ and the \textbf{uniform $\mu$-resolvent set} $\rho_\muD(A_{\tau\in \mathbb{R}})$. The connected components of $\rho_\NmuD$ and $\rho_\muD$ are called spectral gaps, while the connected components of $\Si_\NmuD$ and $\Si_\muD$ are called spectral intervals. We keep this nomenclature on other spectra we define later.

\smallskip

An analogous concept is established on \cite[p.~623]{Silva} for the single equation \eqref{01}. We introduce minor modifications to that notion and obtain that the \textbf{nonuniform $\mu$-dichotomy spectrum} of \eqref{01} is given by
$$\Sigma_\NmuD(A):=\{\gamma\in \overline{\mathbb{R}}: \eqref{001} \text{ does not have } \NmuD\},$$
and analogously we define $\rho_\NmuD(A)$, $\Sigma_\muD(A)$ and $\rho_\muD(A)$. 

\smallskip

In \cite[Thm.~8]{Silva}, the author proves a spectral theorem, concluding that $\Si_\NmuD(A)$ is a finite union of closed intervals, and is bounded if \eqref{01} presents $\Nmu$-growth. By combining this result with Cor.~\ref{200} we obtain the following spectral theorem for dichotomies of families of translated systems.

\begin{theorem}\label{126}
    (Spectral Theorem) Consider the family of systems $\F$ and a growth rate $\mu$. There exists some $m\in \{1,\dots,N\}$ such that
		\[
		\Si_\NmuD(A)=\Si_\NmuD(A_{\tau\in \mathbb{R}})=\bigcup_{i=1}^m[a_i,b_i],
		\]
		for some $a_i,b_i\in \overline{\mathbb{R}}$ verifying $a_i\leq b_i<a_{i+1}$. Moreover, the family of systems $\F$ has $\Nmu$-growth if and only if $\Sigma_\NmuD(A_{\tau\in \mathbb{R}})$ is bounded.
\end{theorem}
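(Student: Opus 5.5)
The plan is to reduce everything to the single-equation spectral theorem \cite[Thm.~8]{Silva} via the propagation results. The key observation is that the $\tau$-translation of the $(\mu,\gamma)$-shifted equation \eqref{001} is exactly \eqref{003}. Indeed, $\frac{\dot{\mu_\tau}(t)}{\mu_\tau(t)}=\frac{\dot\mu(t+\tau)/\mu(\tau)}{\mu(t+\tau)/\mu(\tau)}=\frac{\dot\mu(t+\tau)}{\mu(t+\tau)}$, so the coefficient matrix of \eqref{003} is $t\mapsto A(t+\tau)-\gamma\frac{\dot\mu(t+\tau)}{\mu(t+\tau)}\Id$. This is precisely $B_\tau$, where $B(t):=A(t)-\gamma\frac{\dot\mu(t)}{\mu(t)}\Id$. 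Consequently, the family $\FT$ is the family of translations of the single equation $\dot y=B(t)y$.

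Next I apply Cor.~\ref{200} to $B$ in place of $A$. For each finite $\gamma$ it gives: \eqref{001} has $\NmuD$ if and only if the family $\FT$ has $\NmuD$. For $\gamma=\pm\infty$ the conventions are defined through the existence of some $\widehat\gamma$ with a dichotomy of type $(\Id;\dots)$ or $(0;\dots)$. Applying Cor.~\ref{200} at that $\widehat\gamma$, with the same projector $\Id$ (resp. $0$, whose translations are again $\Id$ resp. $0$), transfers the condition in both directions. Hence $\rho_\NmuD(A)=\rho_\NmuD(A_{\tau\in\mathbb{R}})$, and therefore $\Si_\NmuD(A)=\Si_\NmuD(A_{\tau\in\mathbb{R}})$. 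The structure $\bigcup_{i=1}^m[a_i,b_i]$ with $a_i\le b_i<a_{i+1}$ and $1\le m\le N$ then follows directly from \cite[Thm.~8]{Silva} applied to the single equation \eqref{01}.

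For the final assertion I combine Cor.~\ref{201} with the growth part of \cite[Thm.~8]{Silva}. The family $\F$ has $\Nmu$-growth iff \eqref{01} has $\Nmu$-growth, iff $\Si_\NmuD(A)$ is bounded, iff $\Si_\NmuD(A_{\tau\in\mathbb{R}})$ is bounded. Here "bounded" is read as contained in $\mathbb{R}$, i.e.\ $\pm\infty\notin\Si$. The cited theorem only explicitly gives "$\Nmu$-growth $\Rightarrow$ bounded", so the converse may have to be proved. I would argue it from boundedness: $\pm\infty\in\rho$ yields $\gamma_+$ with a $(\Id;\dots)$-dichotomy and $\gamma_-$ with a $(0;\dots)$-dichotomy. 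Unwinding $\Phi^{\mu,\gamma}(t,s)=\Phi(t,s)(\mu(t)/\mu(s))^{-\gamma}$ then bounds $\|\Phi(t,s)\|$ for $t\ge s$ and for $t\le s$ by powers of $\mu(t)/\mu(s)$ times $\mu(s)^{\sgn(s)\max\{\theta,\nu\}}$, which is $\Nmu$-growth with $a$ the larger of the resulting exponents.

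The main obstacle I expect is bookkeeping rather than substance. One point is the careful handling of the $\pm\infty$ conventions. The other is checking that the modifications made to Silva's definition (minor changes to the shifted equation and the extended real line) do not break the applicability of \cite[Thm.~8]{Silva}; this also determines whether the growth-to-boundedness converse must be supplied by hand as sketched above.
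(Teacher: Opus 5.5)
Your proposal is correct and follows the paper's route: the paper obtains this theorem by combining \cite[Thm.~8]{Silva} with Cor.~\ref{200} (and implicitly Cor.~\ref{201}). Your observation that \eqref{003} is exactly the $\tau$-translate of \eqref{001}, together with your treatment of the $\pm\infty$ conventions, is what makes that combination valid. Your explicit argument for the converse, bounded spectrum $\Rightarrow$ $\Nmu$-growth, fills in a direction the paper leaves implicit, and it checks out once you take $a=\max\{1,\alpha+\gamma_+,-(\beta+\gamma_-)\}$ and $\epsilon=\max\{\theta,\nu\}$, using $\mu(s)^{\sgn(s)}\geq 1$.
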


An analogous theorem is proved for the uniform case.

\section{Comparison and translations of growth rates}\label{comparison}

In this section we explore results about translations of growth rates. These results will be fundamental to the developments of Sect.~\ref{limit}. Moreover, the conclusions from this section show that, for a large class of growth rates, it is not possible to write families of dichotomies and bounded growth with a unified growth rate. To this end, let us consider the comparison criteria for growth rates introduced in \cite{GJ2}.

\begin{definition}
    Consider two growth rates $\mu,\sigma:\mathbb{R}\to \mathbb{R}^+$.  We say $\mu$ is \textbf{faster} than $\sigma$ ($\mu\gg\sigma$) if for every $\alpha,\walpha<0$ there exist $M\geq 1$ such that
    \begin{equation}\label{803}
        \left(\frac{\mu(t)}{\mu(s)}\right)^\alpha\leq M\left(\frac{\sigma(t)}{\sigma(s)}\right)^{\walpha},\quad \forall\,t\geq s.
    \end{equation}
  Equivalently,  we say that $\sigma$ is \textbf{slower} than $\mu$ ($\sigma\ll\mu$).

\smallskip
  
  On the other hand, we say $\mu$ is \textbf{weakly faster} than $\sigma$ ($\mu\succ\sigma$) if there exist $M\geq 1$ such that
    \begin{equation*}
        \frac{\sigma(t)}{\sigma(s)}\leq M \frac{\mu(t)}{\mu(s)},\quad \forall\,t\geq s.
    \end{equation*}
Equivalently, we say that $\sigma$ is \textbf{weakly slower} than $\mu$ ($\sigma\prec\mu$).
\end{definition}

The reader can verify that the polynomial rate $p(t)=(|t|+1)^{\sgn(t)}$ is, for instance, slower than the exponential rate. That is, for every $\alpha,\walpha<0$, there is some $M$ such that, in particular:
$$e^{\alpha(t-s)}\leq M\left(\frac{1+t}{1+s}\right)^{\walpha},\quad \forall\,t\geq s\geq 0.$$

Similarly, the quadratic growth rate $\qeg(t)=e^{\sgn(t)t^2}$ is faster than the exponential. That is, for every $\alpha,\walpha<0$, there is some $M$ such that, in particular:
$$e^{\alpha(t^2-s^2)}\leq M e^{\walpha(t-s)},\quad \forall\,t\geq s\geq 0.$$

On the other hand, if $\mu$ is a growth rate, then $\widetilde{\mu}(t)=\mu(t)^2$ is a new growth rate and $\widetilde{\mu}$ is weakly faster (but not faster) than $\mu$.

\smallskip

The following is immediate from the definition.

\begin{remark}\label{404}
    {\rm
        Consider the equation \eqref{01} and two growth rates $\sigma$ and $\mu$, such that $\sigma\prec \mu$, then:
    \begin{itemize}
        \item [i)] if \eqref{01} has $\mu$-dichotomy, then it also has $\sigma$-dichotomy,
        \item [ii)] if \eqref{01} has $\sigma$-growth, then it also has $\mu$-growth.
    \end{itemize}
    }
\end{remark}

The reader can prove the following as an exercise.

\begin{lemma}\label{409}
    For any $r_1,r_2\in (0,1)$ and $\widetilde{r}_1,\widetilde{r}_2\in (1,+\infty)$, with $r_1<r_2$ and $\widetilde{r}_1<\widetilde{r}_2$, we have the following order between the polynomial, subexponential, exponential and superexponential growth rates:
    $$p\ll\ueg_{r_1}\ll\ueg_{r_2}\ll\exp\ll\seg_{\widetilde{r}_1}\ll\seg_{\widetilde{r}_2}$$
\end{lemma}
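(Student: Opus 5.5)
The plan is to reduce the comparison $\mu\gg\sigma$ to a condition on logarithmic derivatives, and then check that condition for each adjacent pair in the chain.

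\textbf{Reduction.} Write $\mu=e^{f}$ and $\sigma=e^{g}$. The rates in question are absolutely continuous, and $f$, $g$ are nondecreasing. Taking logarithms in \eqref{803} and writing $\alpha=-a$, $\walpha=-\widetilde a$ with $a,\widetilde a>0$, the relation $\mu\gg\sigma$ becomes the following requirement: for every $a,\widetilde a>0$,
\[
\sup_{t\geq s}\Big(\widetilde a\,(g(t)-g(s))-a\,(f(t)-f(s))\Big)<+\infty .
\]
Dividing by $a$, it suffices that for every $c>0$ the supremum of $\int_s^t h_c(u)\,du$ over $t\ge s$ is finite, where $h_c:=c\,g'-f'$.

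\textbf{Sufficient criterion.} Suppose $f'(u)>0$ for $|u|$ large and $g'(u)/f'(u)\to0$ as $|u|\to\infty$. Then for each $c>0$ there is $R=R(c)$ with $h_c\leq 0$ on $\{|u|\geq R\}$. Hence, for all $t\geq s$,
\[
\int_s^t h_c(u)\,du\;\leq\;\int_{-R}^{R} h_c^+(u)\,du\;<\;+\infty,
\]
since $h_c$ is continuous, or at least bounded on compacts. This gives $\mu\gg\sigma$ with $M=\exp\big(a\int_{-R}^{R}h_c^+\big)$.

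\textbf{Computation.} For $u\neq0$ the logarithmic derivatives are:
\begin{itemize}
\item for $p$: $(1+|u|)^{-1}$;
\item for $\ueg_r$: $r(1+|u|)^{r-1}$;
\item for $\exp$: $1$;
\item for $\seg_r$: $r|u|^{r-1}$.
\end{itemize}
The ratio (slower over faster) for each adjacent pair is then:
\begin{itemize}
\item $p$ versus $\ueg_{r_1}$: $(1+|u|)^{-r_1}/r_1$;
\item $\ueg_{r_1}$ versus $\ueg_{r_2}$: $\frac{r_1}{r_2}(1+|u|)^{r_1-r_2}$;
\item $\ueg_{r_2}$ versus $\exp$: $r_2(1+|u|)^{r_2-1}$;
\item $\exp$ versus $\seg_{\widetilde r_1}$: $|u|^{1-\widetilde r_1}/\widetilde r_1$;
\item $\seg_{\widetilde r_1}$ versus $\seg_{\widetilde r_2}$: $\frac{\widetilde r_1}{\widetilde r_2}|u|^{\widetilde r_1-\widetilde r_2}$.
\end{itemize}
Each ratio tends to $0$ as $|u|\to\infty$, because $r_1<r_2<1<\widetilde r_1<\widetilde r_2$. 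So each adjacent relation holds by the criterion.

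If the statement is read as asserting all pairwise relations, I would also check transitivity of $\ll$. This is immediate: given $\alpha,\walpha<0$ for the outer pair, pick any intermediate exponent $\widehat\alpha<0$ and multiply the two constants $M$. Alternatively, the criterion applies directly, since the ratio of non-adjacent rates is a product of ratios tending to $0$.

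\textbf{Main obstacle.} The only delicate point is the behaviour near $u=0$, where $p$ and $\ueg_r$ fail to be differentiable in the log and $\seg_r$ has zero log-derivative. So $f'$ may vanish there and the ratio $g'/f'$ is meaningless. This is exactly why the criterion only uses the limit at infinity and absorbs the compact piece $[-R,R]$ into $\int_{-R}^R h_c^+<\infty$. That requires only local integrability of the log-derivatives, which holds since they are locally bounded.
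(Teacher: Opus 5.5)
Your proof is correct and complete. The paper does not prove this lemma; it is left as an exercise. The only hint it gives is the pair of explicit inequalities for $p$ versus $\exp$ and for $\exp$ versus $\qeg$, written for $t\geq s\geq 0$. A direct verification along those lines would have to treat the sign configurations $0\leq s\leq t$, $s\leq 0\leq t$ and $s\leq t\leq 0$ separately for each of the five adjacent pairs.

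Your route is different and more efficient. You reduce $\mu\gg\sigma$ to $\sup_{t\geq s}\int_s^t (c\,g'-f')\,du<+\infty$ for every $c=\widetilde a/a>0$. You then use the criterion $g'/f'\to 0$ as $|u|\to\infty$, which bounds that supremum by $\int_{-R}^{R}h_c^+$. This handles all sign cases and all pairs in one stroke, and it gives the non-adjacent relations directly without invoking transitivity.

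One small correction to your last paragraph. The functions $\log p$ and $\log\ueg_r$ are in fact $C^1$ on all of $\mathbb{R}$. Their log-derivatives are $(1+|u|)^{-1}$ and $r(1+|u|)^{r-1}$ everywhere, and the one-sided derivatives at $0$ agree, being $1$ and $r$ respectively. So the only degenerate point is $u=0$ for $\seg_r$, where $f'$ vanishes.

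As you observe, this is harmless. You only use that $h_c$ is locally bounded. The fundamental theorem of calculus applies because every logarithm involved is $C^1$.
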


Now we delve to study how this comparison criteria relate to translations of growth rates. As a first remark, note that for a growth rate $\mu$ and $\tau\in \mathbb{R}$ we have $$\mu\gg\exp\Rightarrow\mu_\tau\gg\exp\quad\text{and}\quad \exp\gg\mu\Rightarrow\exp\gg\mu_\tau.$$

In the following, we study the limits of translations of nonexponential growth rates.

\begin{lemma}\label{401}
    Choose $r\in (0,1)$ and a growth rate $\mu\prec \ueg_r$. Then, the maps $t\mapsto \limsup_{\tau\to \pm\infty}\mu_{\tau}(t)$ and $t\mapsto \liminf_{\tau\to \pm\infty}\mu_{\tau}(t)$ are bounded and bounded away from zero.
\end{lemma}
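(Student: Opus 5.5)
The plan is to apply the weak comparison $\mu\prec\ueg_r$ to the pair of times $(\tau,\tau+t)$ and then show that the translates of $\ueg_r$ converge to the constant $1$.

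\textbf{Step 1: two-sided bound on $\mu_\tau$.} By definition of $\mu\prec\ueg_r$ there is $M\geq 1$ with
$$\frac{\mu(t)}{\mu(s)}\leq M\frac{\ueg_r(t)}{\ueg_r(s)}\quad\text{for all } t\geq s.$$
Fix $t\in\mathbb{R}$.
\begin{itemize}
\item[(a)] If $t\geq 0$, taking the pair $(\tau+t,\tau)$ and using monotonicity of $\mu$ gives
$$1\leq \mu_\tau(t)=\frac{\mu(\tau+t)}{\mu(\tau)}\leq M\,(\ueg_r)_\tau(t).$$
\item[(b)] If $t<0$, taking the pair $(\tau,\tau+t)$ gives $\mu_\tau(t)^{-1}\leq M\,(\ueg_r)_\tau(t)^{-1}$, hence
$$M^{-1}(\ueg_r)_\tau(t)\leq \mu_\tau(t)\leq 1.$$
\end{itemize}
So it suffices to control $(\ueg_r)_\tau(t)$ as $\tau\to\pm\infty$, for each fixed $t$.

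\textbf{Step 2: translates of $\ueg_r$ tend to $1$.} I claim $\lim_{\tau\to\pm\infty}(\ueg_r)_\tau(t)=1$ for each fixed $t$. For $|\tau|>|t|$, the numbers $\tau$ and $\tau+t$ have the same sign, and
$$\log(\ueg_r)_\tau(t)=\pm\Big((|\tau+t|+1)^r-(|\tau|+1)^r\Big).$$
By the mean value theorem, the absolute value of this is at most $r|t|\,(\min\{|\tau|,|\tau+t|\}+1)^{r-1}$. Since $r<1$, this tends to $0$ as $|\tau|\to\infty$, which proves the claim.

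\textbf{Step 3: conclusion.} Combining Steps 1 and 2, for every $t$ the values $\limsup_{\tau\to\pm\infty}\mu_\tau(t)$ and $\liminf_{\tau\to\pm\infty}\mu_\tau(t)$ lie in $[1,M]$ when $t\geq0$ and in $[M^{-1},1]$ when $t<0$. They therefore all lie in $[M^{-1},M]$, uniformly in $t$, which is exactly the claimed boundedness and boundedness away from zero.

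\textbf{Main obstacle.} The only delicate point is uniformity: the mean value estimate in Step 2 is not uniform in $t$. This does not matter, because the bound in Step 1 only uses the limit value $1$ for each fixed $t$, and the resulting bounds $M$ and $M^{-1}$ do not depend on $t$.
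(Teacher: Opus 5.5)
Your proof is correct and follows essentially the same route as the paper's: apply $\mu\prec\ueg_r$ to the pair $(\tau+t,\tau)$ (or its reverse) and use the monotonicity of $\mu$ to trap $\mu_\tau(t)$ in $[1,M(\ueg_r)_\tau(t)]$ or $[M^{-1}(\ueg_r)_\tau(t),1]$, then let $(\ueg_r)_\tau(t)\to 1$. The differences are only cosmetic: you merge the paper's four sign cases for $(t,\tau)$ into two by noting that $\tau$ and $\tau+t$ share a sign once $|\tau|>|t|$, and you justify the limit of the translated subexponential rate with a mean value estimate, whereas the paper simply asserts it.
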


\begin{proof}
By hypothesis, there is some $M\geq 1$ such that
$$ \frac{\mu(t)}{\mu(s)}\leq M \frac{\ueg_r(t)}{\ueg_r(s)},\quad \forall\,t\geq s.$$

Let us examine four cases. If $t,\tau>0$ we have $t+\tau>\tau$ and
    $$1\leq \mu_\tau(t)=\frac{\mu(t+\tau)}{\mu(\tau)}\leq M\frac{\ueg_r(t+\tau)}{ \ueg_r(\tau)}=Me^{(t+\tau+1)^r-(\tau+1)^r}\xrightarrow{\tau\to+\infty}M,$$
thus
$$1\leq \liminf_{\tau\to +\infty}\mu_\tau(t)\leq \limsup_{\tau\to+\infty}\mu_\tau(t)\leq M.$$

If $t>0$ and $\tau<0$, then $t+\tau>\tau$ and for $|\tau|$ large enough we have
    $$1\leq \mu_\tau(t)=\frac{\mu(t+\tau)}{\mu(\tau)}\leq M\frac{\ueg_r(t+\tau)}{\ueg_r(\tau)}=Me^{-(1-\tau-t)^r+(1-\tau)^r}\xrightarrow{\tau\to-\infty}M,$$
thus
$$1\leq \liminf_{\tau\to -\infty}\mu_\tau(t)\leq \limsup_{\tau\to-\infty}\mu_\tau(t)\leq M.$$

If $t,\tau<0$ we have $t+\tau<\tau$ and
$$1\leq(\mu_\tau(t))^{-1}=\frac{\mu(\tau)}{\mu(t+\tau)}\leq M\frac{\ueg_r(\tau)}{\ueg_r(t+\tau)}=Me^{-(1-\tau)^r+(1-\tau-t)^r}\xrightarrow{\tau\to-\infty}M,$$
thus
$$M^{-1}\leq \liminf_{\tau\to-\infty}\mu_\tau(t)\leq\limsup_{\tau\to-\infty}\mu_\tau(t)\leq 1.$$

If $t<0$ and $\tau>0$, then $t+\tau<\tau$ and for large enough $\tau$ we have
$$1\leq(\mu_\tau(t))^{-1}=\frac{\mu(\tau)}{\mu(t+\tau)}\leq M\frac{\ueg_r(\tau)}{\ueg_r(t+\tau)}=Me^{(1+\tau)^r-(1+\tau+t)^r}\xrightarrow{\tau\to+\infty}M,$$
thus
$$M^{-1}\leq \liminf_{\tau\to+\infty}\mu_\tau(t)\leq \limsup_{\tau\to+\infty}\mu_\tau(t)\leq 1.$$

Then, combining these cases, the statement follows.
\end{proof}

Now we study the analogous result for superexponential growth rates.

\begin{lemma}\label{402}
    Choose $r\in (1,+\infty)$ and a growth rate $\mu\succ \seg_r$. Then, we have
             \begin{equation*}
    \lim_{\tau\to \pm\infty}\mu_{\tau}(t)= \left\{ \begin{array}{lcc}
             +\infty &  \text{ if } &   t> 0, \\
             1 & \text{ if } &   t= 0, \\
            0 & \text{ if }& t<0 .
             \end{array}
   \right.
\end{equation*}
\end{lemma}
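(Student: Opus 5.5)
Mirroring the proof of Lem.~\ref{401}, we would run the comparison in the opposite direction. By hypothesis $\mu\succ\seg_r$, so there is some $M\geq 1$ such that
$$\frac{\seg_r(t)}{\seg_r(s)}\leq M\frac{\mu(t)}{\mu(s)},\qquad \forall\,t\geq s.$$
The case $t=0$ is trivial, since $\mu_\tau(0)=1$ for every $\tau$. For $t>0$ we apply this with the pair $(t+\tau,\tau)$. This gives $\mu_\tau(t)\geq M^{-1}\seg_r(t+\tau)/\seg_r(\tau)$, so it suffices to show that this quotient tends to $+\infty$ as $\tau\to\pm\infty$. For $t<0$ we apply it with the pair $(\tau,t+\tau)$. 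This gives $\mu_\tau(t)^{-1}=\mu(\tau)/\mu(t+\tau)\geq M^{-1}\seg_r(\tau)/\seg_r(t+\tau)$, and again it suffices to show that this quotient tends to $+\infty$. Then $\mu_\tau(t)\to 0$. In fact both reduce to the same claim, because $\seg_r(\tau)/\seg_r(t+\tau)$ is the quotient $\seg_r(t'+\tau')/\seg_r(\tau')$ with $t'=-t>0$ and $\tau'=t+\tau$, and $\tau'\to\pm\infty$ exactly when $\tau\to\pm\infty$.

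So the key step is to show that, for fixed $t>0$,
$$\log\frac{\seg_r(t+\tau)}{\seg_r(\tau)}=\sgn(t+\tau)|t+\tau|^r-\sgn(\tau)|\tau|^r\xrightarrow{\tau\to\pm\infty}+\infty.$$
We would split into the same four sign cases as in Lem.~\ref{401}, but only two survive in the limit. Once $|\tau|>t$, the numbers $\tau$ and $t+\tau$ have the same sign.

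If $\tau\to+\infty$, the expression is $(t+\tau)^r-\tau^r$. By the mean value theorem it is at least $r t\,\tau^{r-1}$, which tends to $+\infty$ because $r>1$.

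If $\tau\to-\infty$, the expression is $|\tau|^r-(|\tau|-t)^r$. By the mean value theorem it is at least $rt(|\tau|-t)^{r-1}$, which also tends to $+\infty$.

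This is the only place where $r>1$ is used. It is precisely where the argument differs from the subexponential case, in which the corresponding increments tend to $0$.

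I do not expect a genuine obstacle. The one point needing care is the sign bookkeeping, namely that the definition of $\succ$ is only usable for ordered pairs $t\geq s$. That is why the case $t<0$ must be handled through the reciprocal $\mu_\tau(t)^{-1}$, exactly as in the proof of Lem.~\ref{401}. With these lower bounds in hand, the limits are genuine limits and not merely $\liminf$ bounds, since they diverge to $+\infty$, respectively converge to $0$ after taking reciprocals.
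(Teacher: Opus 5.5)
Your proof is correct and follows the paper's argument: you use the same lower bound from $\mu\succ\seg_r$ on the pair $(t+\tau,\tau)$, the same passage to $\mu_\tau(t)^{-1}$ for $t<0$, and the same divergence of the $\seg_r$-quotient as $\tau\to\pm\infty$. Your differences are only streamlining: you collapse the paper's four sign cases into two via the substitution $t'=-t$, $\tau'=t+\tau$, and you supply the mean value theorem estimate for the divergence, which the paper leaves implicit.
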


\begin{proof}

By hypothesis, there is some $M\geq 1$ such that
$$ \frac{\mu(t)}{\mu(s)}\geq M^{-1} \frac{\seg_r(t)}{\seg_r(s)},\quad \forall\,t\geq s.$$
Let us examine four cases. If $t,\tau>0$ we have $t+\tau>\tau$ and

    $$\mu_\tau(t)=\frac{\mu(t+\tau)}{\mu(\tau)}\geq M^{-1}\frac{\seg_r(t+\tau)}{ \seg_r(\tau)}=Me^{(t+\tau)^r-\tau^r}\xrightarrow{\tau\to+\infty}+\infty,$$
thus
$$ \liminf_{\tau\to +\infty}\mu_\tau(t)= \limsup_{\tau\to+\infty}\mu_\tau(t)=+\infty.$$

If $t>0$ and $\tau<0$, then $t+\tau>\tau$ and for $|\tau|$ large enough we have
    $$\mu_\tau(t)=\frac{\mu(t+\tau)}{\mu(\tau)}\geq M^{-1}\frac{\seg_r(t+\tau)}{\seg_r(\tau)}=M^{-1}e^{-|t+\tau|^r+|\tau|^r}\xrightarrow{\tau\to-\infty}+\infty,$$
thus
$$\liminf_{\tau\to -\infty}\mu_\tau(t)= \limsup_{\tau\to-\infty}\mu_\tau(t)=+\infty.$$

If $t,\tau<0$ we have $t+\tau<\tau$ and
$$(\mu_\tau(t))^{-1}=\frac{\mu(\tau)}{\mu(t+\tau)}\geq M^{-1}\frac{\seg_r(\tau)}{\seg_r(t+\tau)}=M^{-1}e^{-|\tau|^r+|t+\tau|^r}\xrightarrow{\tau\to-\infty}+\infty,$$
thus
$$ \liminf_{\tau\to-\infty}\mu_\tau(t)=\limsup_{\tau\to-\infty}\mu_\tau(t)=0.$$

If $t<0$ and $\tau>0$, then $t+\tau<\tau$ and for large enough $\tau$ we have
$$(\mu_\tau(t))^{-1}=\frac{\mu(\tau)}{\mu(t+\tau)}\geq M^{-1}\frac{\seg_r(\tau)}{\seg_r(t+\tau)}=M^{-1}e^{\tau^r-(t+\tau)^r}\xrightarrow{\tau\to+\infty}+\infty$$
thus
$$\liminf_{\tau\to+\infty}\mu_\tau(t)=\limsup_{\tau\to+\infty}\mu_\tau(t)=0.$$

Then, combining these cases, the statement follows.
\end{proof}

Taking these results in consideration, we have the following definition.

\begin{definition}\label{608}
    A growth rate $\mu$ is called \textbf{slow} if $\mu\prec \ueg_r$, for some subexponential rate $r\in (0,1)$, while it is called \textbf{fast} if $\mu\succ\seg_{\widetilde{r}}$, for some superexponential rate $\widetilde{r}\in (1,+\infty)$.
\end{definition}

It is immediate that if $\mu$ is slow, then $\mu \ll \exp$, and if $\mu$ is fast, then $\mu \gg \exp$. However, the converses of these implications do not hold. While not every growth rate is classified as either slow or fast, we have nonetheless encompassed a broad class of growth rates, including many of the commonly studied ones. For instance, cubic and quadratic growth are superexponential, square root growth is subexponential, and polynomial growth, in particular, is slow. From Lem.~\ref{401} and Lem.~\ref{402} we deduce the following consequence.

\begin{corollary}\label{405}
    If $\mu$ is a slow growth rate, then there is no growth rate $\sigma$ that verifies $\sigma\prec\mu_\tau$ for every $\tau\in \mathbb{R}$. Similarly, if $\mu$ is fast, then there is no growth rate $\sigma$ satisfying $\sigma\succ\mu_\tau$ for every $\tau\in \mathbb{R}$.
\end{corollary}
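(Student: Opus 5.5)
The plan is to argue by contradiction and pass to the limit $\tau\to\pm\infty$ in the comparison inequality, using Lem.~\ref{401} for the slow case and Lem.~\ref{402} for the fast case. In both cases the conclusion should contradict the requirement in Def.~\ref{620} that a growth rate tends to $+\infty$ at $+\infty$ and to $0$ at $-\infty$.

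\emph{Slow case.} Suppose $\mu$ is slow and some growth rate $\sigma$ satisfies $\sigma\prec\mu_\tau$ for every $\tau\in\mathbb{R}$. Then for each $\tau$ there is $M_\tau\geq 1$ with
\[
\frac{\sigma(t)}{\sigma(s)}\leq M_\tau\,\frac{\mu_\tau(t)}{\mu_\tau(s)},\qquad \forall\, t\geq s.
\]
Take $s=0$ and $t>0$. Since $\sigma(0)=\mu_\tau(0)=1$, this gives $\sigma(t)\leq M_\tau\,\mu_\tau(t)$. Now let $\tau\to+\infty$ along a sequence on which the constants stay bounded, say $M_{\tau_n}\leq M$. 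Lem.~\ref{401}, applied with the subexponential rate $\ueg_r\succ\mu$, then yields
\[
\sigma(t)\leq M\limsup_{n}\mu_{\tau_n}(t)\leq M\,C ,
\]
where $C$ is the uniform bound from Lem.~\ref{401}. In fact the proof of that lemma shows $C$ can be taken to be the comparison constant of $\mu\prec\ueg_r$. Hence $\sigma$ is bounded on $(0,+\infty)$, which contradicts $\sigma(t)\to+\infty$.

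\emph{Fast case.} Suppose $\mu$ is fast and $\sigma\succ\mu_\tau$ for every $\tau$. Then
\[
\frac{\mu_\tau(t)}{\mu_\tau(s)}\leq M_\tau\,\frac{\sigma(t)}{\sigma(s)},\qquad \forall\, t\geq s.
\]
Fix $t=1$ and $s=0$, so that $\mu_\tau(1)\leq M_\tau\,\sigma(1)$. Letting $\tau\to+\infty$ along a sequence with bounded $M_{\tau_n}$, Lem.~\ref{402} gives $\mu_{\tau_n}(1)\to+\infty$, while the right-hand side stays bounded. This is the contradiction. Alternatively, using $t=0$ and $s=-1$ one gets $\mu_{\tau_n}(-1)\to 0$, which contradicts the lower bound $\sigma(-1)\,M^{-1}$.

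\emph{Main obstacle.} The key step is securing a bounded subsequence of the constants $M_\tau$, so that a comparison "for every $\tau$" really yields one constant that survives the limit. I would get this from the sense in which the comparison is meant to describe the whole family $\F$ by a single rate, as in Defs.~\ref{99} and~\ref{134}: there the constants are required to form a continuous map in $\tau$ and the limit is taken along the family. So I would first extract a sequence $\tau_n\to\pm\infty$ with $\sup_n M_{\tau_n}<\infty$ and only then apply Lemmas~\ref{401} and~\ref{402}. This is the part I would check most carefully. If the constants are allowed to blow up freely, the pointwise limits from Lemmas~\ref{401} and~\ref{402} no longer transfer to $\sigma$, and the argument would need to control how fast $M_\tau$ grows relative to $\mu_\tau(t)$.
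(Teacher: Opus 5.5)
Your limit argument is the one the paper intends: it simply states that the corollary follows from Lem.~\ref{401} and Lem.~\ref{402}. However, the step you flag as the main obstacle is a genuine gap, and your patch does not close it. Continuity of $\tau\mapsto M_\tau$ only bounds $M_\tau$ on compact sets. It says nothing along $\tau_n\to\pm\infty$, which is exactly where the two lemmas are used. Moreover, Defs.~\ref{99} and~\ref{134} constrain $K_\tau$ and $L_\tau$, not the comparison constants in $\sigma\prec\mu_\tau$.

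Worse, no control on the growth of $M_\tau$ can rescue the argument, because with $\tau$-dependent constants the statement is false.

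\emph{Slow case.} Take $\sigma=\mu$. For fixed $\tau$, the map $x\mapsto\mu_x(\tau)=\mu(x+\tau)/\mu(x)$ is continuous and positive. By Lem.~\ref{401} it has finite $\limsup$ and positive $\liminf$ at $\pm\infty$, so it takes values in some $[c_\tau,C_\tau]\subset(0,+\infty)$. The identity
\[
\frac{\mu(t)}{\mu(s)}=\frac{\mu_\tau(t)}{\mu_\tau(s)}\cdot\frac{\mu_s(\tau)}{\mu_t(\tau)}
\]
then gives $\mu\prec\mu_\tau$ with constant $C_\tau/c_\tau$, for every $\tau$. For the polynomial rate $p$ one can take $M_\tau=(1+|\tau|/2)^2$, which is continuous in $\tau$ yet unbounded.

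\emph{Fast case.} Here $\ceg\succ\qeg_\tau$ for every $\tau$. Set $h_\tau(x)=\sgn(x+\tau)(x+\tau)^2-x^3$. Then $h_\tau'(x)=2|x+\tau|-3x^2<0$ off a compact interval, hence $\sup_{t\geq s}\bigl(h_\tau(t)-h_\tau(s)\bigr)<+\infty$. This is exactly $\qeg_\tau(t)/\qeg_\tau(s)\leq M_\tau\,\ceg(t)/\ceg(s)$.

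So the missing ingredient is a correct reading of the hypothesis, not an extra estimate. The corollary holds only when one constant $M\geq 1$ serves all $\tau\in\mathbb{R}$ simultaneously. This is also what Cor.~\ref{403} needs, since there a \emph{unified} rate for the whole family is meant. With that reading your argument is complete and matches the paper's:
\begin{itemize}
\item[(i)] For $t>0$ and all $\tau$ we have $\sigma(t)\leq M\mu_\tau(t)$, so Lem.~\ref{401} bounds $\sigma$ on $(0,+\infty)$, contradicting $\sigma(t)\to+\infty$.
\item[(ii)] $\mu_\tau(1)\leq M\sigma(1)$ for all $\tau$, which contradicts Lem.~\ref{402}.
\end{itemize}
No subsequence extraction is needed. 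You should state the uniformity of $M$ in $\tau$ explicitly as part of the hypothesis rather than try to derive it.
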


As noted in Rem.~\ref{404}, the weak comparison provides a useful tool for transferring dichotomies and bounded growth properties from one growth rate to another. From Cor.~\ref{405} we arrive at the following conclusion:

\begin{corollary}\label{403}
Consider the family of systems $\F$. 
    \begin{itemize}
    \item [i)] If it has $\mu$-growth, for some slow $\mu$, then every equation of the family has exponential bounded growth with the same parameters.
    \smallskip
    \item [ii)] If it has $\mu$-growth, for some fast $\mu$, then it is impossible to describe the growth of all the equations of the family with a unified growth rate.
    \smallskip
    \item [iii)] If it has $\mu$-dichotomy, for some slow $\mu$, then it is impossible to describe the dichotomies of all the equations of the family with a unified growth rate.
    \smallskip
    \item [iv)] If it has $\mu$-dichotomy, for some fast $\mu$, then every equation of the family it has exponential dichotomy with the same parameters.
\end{itemize}
\end{corollary}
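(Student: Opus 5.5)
We argue each of the four items separately, using the comparison machinery of this section together with the propagation results of Sect.~\ref{dynamic}.

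\textbf{Items i) and iv).} Suppose the family $\F$ has $\mu$-growth with constants $L$ and $a$. We write $\mu_\tau(t)/\mu_\tau(s)=\mu(t+\tau)/\mu(s+\tau)$. If $\mu$ is slow, then $\mu\prec\ueg_r\prec\exp$, so there is $M\geq 1$ with
\[
\frac{\mu(t+\tau)}{\mu(s+\tau)}\leq M\,e^{t-s}\quad\text{for all } t\geq s \text{ and all } \tau .
\]
The relation $\ueg_r\prec\exp$ follows from $(x+h+1)^r-(x+1)^r\leq h$ for $x\geq 0$, and similarly for negative arguments, by the mean value theorem. This bound is uniform in $\tau$ precisely because the comparison constant is translation invariant: the inequality defining $\prec$ is stated for all $t\geq s$. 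For $t<s$ the exponent $\sgn(t-s)a=-a$ inverts the ratio, and the same estimate applies with the roles of $t$ and $s$ swapped. Hence every $\Phi_\tau$ satisfies
\[
\|\Phi_\tau(t,s)\|\leq LM^{a}e^{a|t-s|},
\]
which is exponential growth with the same constants $LM^a$ and $a$ for every $\tau$. This is exactly Rem.~\ref{404} ii), applied uniformly in $\tau$.

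Item iv) is dual. If $\mu$ is fast, then $\mu\succ\seg_{\widetilde r}\succ\exp$. The second relation holds since $(t+\tau)^{\widetilde r}-\tau^{\widetilde r}\geq t$ for $\tau\geq 0$ with $\widetilde r>1$, with similar case checks for the sign combinations; if needed one rescales or absorbs a constant $M$. This gives
\[
e^{t-s}\leq M\,\frac{\mu(t+\tau)}{\mu(s+\tau)}\quad\text{for } t\geq s .
\]
Raising to the negative power $\alpha$ yields
\[
\Big(\frac{\mu_\tau(t)}{\mu_\tau(s)}\Big)^\alpha\leq M^{-\alpha}e^{\alpha(t-s)},
\]
and similarly for the $\beta$ branch. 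So the uniform $\mu$-dichotomy of the family transfers to an exponential dichotomy with parameters $(\P_\tau;KM^{\max\{-\alpha,\beta\}},\alpha,\beta)$ for every $\tau$, by Rem.~\ref{404} i).

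\textbf{Items ii) and iii).} These are impossibility statements. We interpret ``describe with a unified growth rate $\sigma$'' as the existence of a single $\sigma$ such that the transfer of Rem.~\ref{404} works for every $\tau$. For growth this means $\mu_\tau\prec\sigma$ for all $\tau$; for dichotomy it means $\sigma\prec\mu_\tau$ for all $\tau$. In each case Cor.~\ref{405} rules such a $\sigma$ out.

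For iii), assume some $\sigma$ with $\sigma\prec\mu_\tau$ for all $\tau$. Then, by Lem.~\ref{401}, passing $\tau\to\pm\infty$ would force $\sigma(t)/\sigma(0)=\sigma(t)$ to be bounded above for $t>0$. This contradicts $\sigma(t)\to+\infty$.

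For ii), assume $\sigma\succ\mu_\tau$ for all $\tau$. Then Lem.~\ref{402} would force $\sigma(t)\geq M^{-1}\mu_\tau(t)\to+\infty$ for fixed $t>0$. This is absurd.

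\textbf{Main obstacle.} The delicate point is that in Cor.~\ref{405} the constant $M$ in $\prec$ may depend on $\tau$. The contradiction therefore needs either uniformity in $\tau$, or an argument that the constants cannot absorb the divergence. I would first make the definition of ``unified'' require a common constant, which is the natural reading for a family with constant $L$ or $K$, and then apply the lemmas. Otherwise I would argue via the fixed-$t$ limits directly, possibly choosing $t$ large relative to $M$.
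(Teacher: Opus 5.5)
Your proof is correct and follows the paper's route: items i) and iv) are Rem.~\ref{404} applied with $\exp$, whose translation invariance makes the comparison constant uniform in $\tau$, and items ii) and iii) are Cor.~\ref{405} read through Rem.~\ref{404}. Two small remarks. First, the pointwise inequality $(t+\tau)^{\widetilde r}-\tau^{\widetilde r}\geq t$ fails for small $\tau,t$, but you do not need it: $\exp\prec\seg_{\widetilde r}$ follows at once from Lem.~\ref{409} by taking $\alpha=\widetilde\alpha=-1$ in the definition of $\ll$. Second, your common-constant reading of ``unified'' is not just natural but forced. For each fixed $\tau$ one has $p\prec p_\tau$ with a $\tau$-dependent constant, because the derivatives of $\log p$ and $\log p_\tau$ differ by an integrable function, so the fallback in your last paragraph could not succeed.
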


\section{Behavior at limiting equations of skew-products}\label{behavior}

In this section we present the main results of this work. Subsection \ref{skew} is an introduction to the skew-product setting, and the main conclusions are derived on Subsection \ref{limit}, proving that growth rates that are either fast or slow either do not occur or are inconsequential in the skew-product context. On the remainder of the section we obtain similar conclusions for the nonuniform exponential case, and we revisit the examples from Section \ref{dynamic} with the skew-product notations.

\subsection{The skew-product setting}\label{skew}

In this subsection we present the basic notions from the skew-product setting, going over the classic notions of exponential dichotomy, bounded growth and dichotomy spectrum.

\smallskip

Consider a space $\Omega$ of locally integrable functions defined on $\mathbb{R}$ taking values in $\mathbb{R}^{N\times N}$, endowed with a topology such that the base flow of translations $\mathbb{R}\times \Omega\to \Omega$, $(\tau,\omega)\mapsto \omega_\tau$, where $\omega_\tau(t)=\omega(t+\tau)$, is well defined and continuous. Tipically, the chosen topology makes $\Omega$ a compact space. Moreover, the considered topology verifies that the evaluation map $A:\Omega\to \mathbb{R}^{N\times N}$, defined by $A(\omega)=\omega(0)$, is continuous. We refer the reader to \cite{Artstein1,Longo,Longo2,Sell68} for several constructions of such topologies.

\smallskip

Given a point $\omega\in\Omega$, its orbit is the set $\mathcal{O}(\omega)=\{\omega_\tau:\tau\in \mathbb{R}\}$. Then define the alpha limit set $\A(\omega)$ via $\widehat{\omega}\in \A(\omega)$ if there is a sequence $(\tau_n)_{n\in \mathbb{N}}$ with $\tau_n\searrow -\infty$ such that $\widehat{\omega}=\lim_{n\to \infty}\omega_{\tau_n}$, in the topology of $\Omega$. Analogously, the omega limit set $\O(\omega)$ is defined as those $\widehat{\omega}\in \Omega$ for which there is a sequence $(\tau_n)_{n\in \mathbb{N}}$ with $\tau_n\nearrow\infty$ such that $\widehat{\omega}=\lim_{n\to \infty}\omega_{\tau_n}$.  Finally, the hull of $\omega$, denoted by $\H(\omega)$, is the closure on $\Omega$ of the orbit of $\omega$. Note that $\H(\omega)$ is the smallest closed invariant set containing $\omega$.

\smallskip

A key characteristic of the considered topologies is that if $\widehat{\omega}=\lim_{n\to \infty}\omega_{\tau_n}$ for some sequence $(\tau_n)_{n\in \mathbb{N}}$ with $\tau_n\searrow -\infty$ or $\tau_n\nearrow\infty$, then $\widehat{\omega}(t)=\lim_{n\to \infty}\omega_{\tau_n}(t)$ for almost every $t\in \mathbb{R}$. In particular, convergence in $\Omega$ implies point-wise convergence almost everywhere.

\smallskip

Employing the evaluation map $A$ we define the family of linear differential systems
\begin{equation}\label{2}
\dot{x}(t)=A(\omega_t)x(t),\qquad \omega\in \Omega.
\end{equation}

Note that a family of translated equations (as those studied in Section \ref{dynamic}) corresponds to the restriction of the family \eqref{2} to the elements of a particular orbit. The fact that $\Omega$ contains only locally integrable functions imply the existence of solutions for \eqref{2}. For a given $\omega\in \Omega$, we denote the evolution operator of \eqref{2} by $\Phi_\omega:\mathbb{R}\times \mathbb{R}\to \mathbb{R}^{N\times N}$, extending the notations from Sect. \ref{dynamic}. In particular, the map $t\mapsto\Phi_\omega(t,0)$ corresponds to the principal fundamental matrix of \eqref{2}. For every $t,s,\tau\in \mathbb{R}$ and $\omega \in \Omega$ we have:
\begin{itemize}
\item $\Phi_\omega$ is non-singular and $\Phi^{-1}_\omega(t,0)=\Phi_{\omega_t}(-t,0)$,
\item $\Phi_\omega(s+t,0)=\Phi_{\omega_t}(s,0)\Phi_\omega(t,0)$, $\Phi_\omega(0,0)=\Id$,
\item $\Phi_\omega(t+\tau,s+\tau)=\Phi_{\omega_\tau}(s,t)$.
\end{itemize}

Moreover, the assignation $(\omega,t,s)\mapsto \Phi_\omega(t,s)$ is jointly continuous. Through this operator, we define the linear skew-product flow
\begin{equation}\label{1}
    \begin{array}{ccc}
         \Psi:\mathbb{R}\times \Omega\times \mathbb{R}^N &\to&\Omega\times \mathbb{R}^N\\
         (t,\omega,x)&\mapsto&\left(\omega_t,\Phi_\omega(t,0)x\right).
    \end{array}
\end{equation}

Consider an invariant set $\Delta\subset \Omega$. In the context of skew-products, an \textbf{invariant projector} is a map $\P:\Delta\to \mathbb{R}^{N\times N}$ such that 
$$\P^2(\omega)=\P(\omega),\quad\P(\omega_t)\Phi_\omega(t,s)=\Phi_\omega(t,s)\P(\omega_s),\qquad\forall\,t,s\in \mathbb{R},\,\omega\in \Delta.$$

It is immediate that if $\P:\Delta\to \mathbb{R}^{N\times N}$ is an invariant projector, then for every $\omega\in \Delta$, the map $s\mapsto\P_\omega(s):=\P(\omega_s)$ is an invariant projector for the equation $\dot{x}=A(\omega_t)x(t)$.

\smallskip

    Conversely, if $s\mapsto \P(s)$ is an invariant projector for the equation $\dot{x}=A(\omega_t)x(t)$, with which it has $\NmuD$, then the map $\mathcal{O}\ni\omega_\tau\mapsto\P(\tau)$ defines an invariant projector for the skew-product over the orbit of $\omega$. The following result shows that, under a bounding condition, this invariant projector is continuously extended to the whole hull. We skip the proof because it presents minor changes with respect to the one of \cite[Prop.~4.3]{Longo}.

\begin{lemma}\label{97}
    Fix $\omega\in \Omega$ and suppose that there is an invariant projector $s\mapsto\P(s)\in \mathbb{R}^{N\times N}$ with which the system $\dot{x}=A(\omega_t)x(t)$ has $\Nmu$-dichotomy for some growth rate $\mu$. If this invariant projector is uniformly bounded, that is
$$\sup_{\tau\in \mathbb{R}}\left\{\norm{\P(\tau)},\norm{\Id-\P(\tau)}\right\}<+\infty,$$
  then, there is a unique continuous map of projections $\P:\H(\omega)\to \mathbb{R}^{N\times N}$ that extends $\P(\omega_\tau)=\P(\tau)$.  
\end{lemma}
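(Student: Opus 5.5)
The plan follows the standard argument of \cite[Prop.~4.3]{Longo}, adapted to the uniform boundedness hypothesis on $\P$. First, uniqueness is immediate: the orbit $\mathcal{O}(\omega)$ is dense in $\H(\omega)$, so two continuous extensions that agree on it agree everywhere. For existence, fix $\widehat{\omega}\in\H(\omega)\setminus\mathcal{O}(\omega)$ and a sequence $\tau_n$ with $\omega_{\tau_n}\to\widehat{\omega}$. Since $\sup_\tau\norm{\P(\tau)}<+\infty$, the sequence $\P(\tau_n)$ lies in a compact subset of $\mathbb{R}^{N\times N}$ and has convergent subsequences. Any limit $Q$ of such a subsequence is a projection, since $\P(\tau_n)^2=\P(\tau_n)$ passes to the limit.

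Next, I will show that any such limit $Q$ is an invariant projector for $\dot x=A(\widehat{\omega}_t)x$, that is, $Q(\widehat\omega_t):=\Phi_{\widehat\omega}(t,0)Q\Phi_{\widehat\omega}(0,t)$ is well-defined. This follows from the invariance identity $\P(t+\tau_n)=\Phi_{\omega_{\tau_n}}(t,0)\P(\tau_n)\Phi_{\omega_{\tau_n}}(0,t)$ together with the joint continuity of $(\omega,t,s)\mapsto\Phi_\omega(t,s)$.

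The key step, which I expect to be the main obstacle, is to show that $Q$ does not depend on the subsequence or on the sequence $\tau_n$. I would identify $\rang Q$ and $\ker Q$ intrinsically, as the vectors $x$ for which $\Phi_{\widehat\omega}(t,0)x$ is bounded for $t\ge0$, respectively for $t\le0$. To do this, pass to the limit in the dichotomy estimates of Lem.~\ref{44} at the points $\omega_{\tau_n}$: for fixed $t\ge0$,
\[
\norm{\Phi_{\omega_{\tau_n}}(t,0)\P(\tau_n)}\le K_{\tau_n}\mu_{\tau_n}(t)^{\alpha}.
\]
The difficulty is that $K_{\tau_n}$ may blow up and $\mu_{\tau_n}$ may degenerate. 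The ``minor changes'' presumably handle exactly this point. What I would try first is a uniform-in-$n$ bound $\sup_{t\ge 0}\norm{\Phi_{\omega_{\tau_n}}(t,0)\P(\tau_n)}\le C$ that holds at least along the subsequence. Such a bound gives boundedness of $\Phi_{\widehat\omega}(t,0)x$ for $t\ge0$ when $x\in\rang Q$, and symmetrically for $\ker Q$ and $t\le0$. If that bound fails, my fallback is to use only that $\Phi_{\omega_{\tau_n}}$ converges locally uniformly, combined with the equicontinuity of the subspaces $\rang\P(\tau_n)$, which follows from the uniform projector bound.

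Next, I will prove that bounded forward and backward solutions intersect trivially, using the same limiting estimates, in the spirit of Prop.~\ref{300}. Once that holds, $\rang Q\oplus\ker Q=\mathbb{R}^N$ forces $\rang Q$ and $\ker Q$ to be exactly these intrinsic subspaces. They depend only on $\widehat\omega$, so $Q$ is uniquely determined. Finally, the same argument shows continuity: if $\widehat\omega_k\to\widehat\omega$, then a diagonal argument and compactness show that every limit point of $\P(\widehat\omega_k)$ is a projection with the intrinsic range and kernel at $\widehat\omega$, hence equals $\P(\widehat\omega)$.
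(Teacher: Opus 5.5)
Your skeleton is the argument of \cite[Prop.~4.3]{Longo}: take subsequential limits $Q$ of $\P(\tau_n)$, then prove uniqueness by describing $\rang Q$ and $\ker Q$ intrinsically. However, the step you flag as the main obstacle is a genuine gap, and it cannot be closed under the stated hypotheses. To identify $Q$ you need the forward-bounded and backward-bounded solutions at $\widehat\omega$ to intersect trivially, i.e.\ some dichotomy must survive in the limit. Passing to the limit in the estimate from Lem.~\ref{44} gives at best, even if $K_{\tau_n}\equiv K$, the bound $\norm{\Phi_{\widehat\omega}(t,0)Q}\le K\liminf_{n}\mu_{\tau_n}(t)^{\alpha}$. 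For a slow rate, Lem.~\ref{401} says $\mu_{\tau_n}(t)$ stays bounded and bounded away from zero. So you keep boundedness but lose all decay, and nothing excludes a nontrivial intersection of the stable and unstable fibers at $\widehat\omega$; under slow growth, Thm.~\ref{601} even shows that every solution there is bounded. Your fallback does not help either. A uniform bound on $\norm{\P(\tau)}$ gives precompactness of $\{\P(\tau_n)\}$, i.e.\ existence of limit points, not their uniqueness, which is exactly what is needed.

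In fact the statement fails as written. Let $D(t)=\mathrm{diag}\bigl(-\frac{1}{1+|t|},\frac{1}{1+|t|}\bigr)$, $J=\begin{pmatrix}0&-1\\1&0\end{pmatrix}$, $R(\phi)=e^{\phi J}$, $\phi(t)=\log(1+t^2)$, and
\[
\omega(t)=\phi'(t)J+R(\phi(t))D(t)R(\phi(t))^{-1}.
\]
The change $x=R(\phi(t))y$ conjugates $\dot y=D(t)y$ to $\dot x=\omega(t)x$ through orthogonal matrices. Take the orthogonal projections $\P(t)=R(\phi(t))\,\mathrm{diag}(1,0)\,R(\phi(t))^{-1}$, for which $\norm{\P(t)}=\norm{\Id-\P(t)}=1$. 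With them the equation has a uniform polynomial dichotomy with parameters $(\P;1,-1,1)$, as in Ex.~\ref{550}.

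Since $\omega(t)\to0$ as $|t|\to\infty$, in the compact-open topology used in the revisited examples (with $\Omega=\mathcal{O}(\omega)\cup\{0\}$) one has $\H(\omega)=\mathcal{O}(\omega)\cup\{0\}$. But $\phi(\tau)\to+\infty$ as $\tau\to\pm\infty$. Along $\phi(\tau_n)=2\pi n$ we get $\P(\tau_n)=\mathrm{diag}(1,0)$, and along $\phi(\tau'_n)=2\pi n+\pi/2$ we get $\P(\tau'_n)=\mathrm{diag}(0,1)$, while $\omega_{\tau_n},\omega_{\tau'_n}\to0$ in both cases. So no continuous extension to $\H(\omega)$ exists. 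The limit equation $\dot x=0$ is precisely one where forward- and backward-bounded solutions are all of $\mathbb{R}^2$.

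Your argument goes through only under an extra hypothesis guaranteeing that the estimates reach every $\widehat\omega\in\H(\omega)$ with $n$-independent constants and a nondegenerate rate. An example is a uniform exponential dichotomy, where $K_\tau\equiv K$ and the translated rate is again $e^t$; this is the setting of \cite[Prop.~4.3]{Longo}. There Prop.~\ref{300} at $\widehat\omega$ fixes $\rang Q$ and $\ker Q$, and your remaining steps are fine.
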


The previous lemma is in particular valid when the dichotomy on the point $\omega$ is uniform, since in that case, it is immediate from Lem.~\ref{44} that the projections are bounded. 

\smallskip

The following concepts \cite{Dueñas,Longo,Sacker} depict how classic exponential dichotomies have been studied in the context of skew-product systems. 

\begin{definition}\label{099}
We say that the linear skew-product \eqref{1} admits \textbf{exponential dichotomy} over $\Omega$ if there is a continuous invariant projector $\P:\Delta\to \mathbb{R}^{N\times N}$ and constants $K\geq 1$, $\alpha<0$ and $\beta>0$ such that for every $\omega\in \Omega$ we have
       \[\left\{ \begin{array}{lc}
            \norm{\Phi_\omega(t,s)\P(\omega_s)}&\leq Ke^{\alpha(t-s)}\quad\text{ for }t\geq s,\nonumber\\
            \norm{\Phi_\omega(t,s)[\Id-\P(\omega_s)]}&\leq Ke^{\beta(t-s)}\text{ for }t\leq s.
            \end{array}
           \right.\]
On the other hand, we say that the linear skew-product \eqref{1} has \textbf{exponential bounded growth} if there constants $a>0$, $L\geq 1$ and $\epsilon\geq 0$ such that 
\begin{equation*}
    		\|\Phi_\omega(t,s)\|\leq Le^{a|t-s|},\quad\forall\,t,s\in \mathbb{R}, \,\omega\in \Omega.
\end{equation*}
\end{definition}

Given $\gamma\in \mathbb{R}$, we define the $\gamma$-shifted skew-product of \eqref{1} by
\begin{equation}\label{5}
    \begin{array}{ccc}
         \Psi_\gamma:\mathbb{R}\times \Omega\times \mathbb{R}^N &\to&\Omega\times \mathbb{R}^N\\
         (t,\omega,x)&\mapsto&\left(\omega_t,e^{-\gamma t}\cdot\Phi_\omega(t,0)x\right).
    \end{array}
\end{equation}
With this, we define the \textbf{exponential dichotomy spectrum} of the skew-product \eqref{1} as
$$\Sigma_{\exp\!\mathrm{D}}(\Omega)=\{\gamma\in \overline{\mathbb{R}}: \eqref{5}\text{ has no exponential dichotomy}\}.$$

Within the skew-product framework, a natural question is whether the concept of dichotomy extends beyond the exponential setting. When the base consists of a single orbit, the results of Section~\ref{dynamic} already address this case.   The main difficulties arise instead in the analysis of more general limiting equations (those associated with elements of the omega and alpha limit sets). In the subsections that follow we show that such extension is generally not relevant.


\smallskip


\subsection{Fast and slow behavior}\label{limit}

In the following, we study the consequences of the results of the previous sections. We focus our attention in how fast and slow behaviors interact with the limit sets of a skew-product. These results lead us to the conclusions that both fast and slow behaviors are either inexistent or irrelevant in the context of compact-base skew-products. This is why in the previous section, the concepts of dichotomy, bounded growth and spectrum are only presented for the exponential case.

\begin{theorem}\label{602}
    Consider the skew-product \eqref{1} and chose $\omega\in \Omega$. Suppose the equation \eqref{2} has $\mu$-dichotomy for some fast growth rate (equivalently, $0\not\in \Sigma_\muD(\omega)$). Then, we have $ \mathbb{A}(\omega)=\mathbb{O}(\omega)=\varnothing$.
\end{theorem}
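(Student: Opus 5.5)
We argue by contradiction. Suppose $\widehat{\omega}\in\O(\omega)$, so $\widehat{\omega}=\lim_{n\to\infty}\omega_{\tau_n}$ in $\Omega$ for some sequence $\tau_n\nearrow+\infty$. The case $\widehat{\omega}\in\A(\omega)$, with $\tau_n\searrow-\infty$, is identical, because Lem.~\ref{402} covers both limits $\tau\to\pm\infty$. The idea is that the $\tau_n$-translated equations all have dichotomies with the same constants, but with respect to the translated rates $\mu_{\tau_n}$. By Lem.~\ref{402} these rates blow up at every $t\neq 0$. Passing to the limit will then force the limiting projector to be both $0$ and $\Id$.

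\emph{Step 1: uniform estimates along the sequence.} Let $(\P;K,\alpha,\beta)$ be the parameters of the $\mu$-dichotomy of $\dot x=A(\omega_t)x(t)$. By Lem.~\ref{44} and Rem.~\ref{49}, for every $n$ the equation driven by $\omega_{\tau_n}$ has a $\mu_{\tau_n}$-dichotomy with parameters $(\P_{\tau_n};K,\alpha,\beta)$. The constant $K$ is independent of $n$ because the dichotomy is uniform. Taking $t=s$ in the estimates gives $\norm{\P(\tau)}\le K$ and $\norm{\Id-\P(\tau)}\le K$ for all $\tau\in\mathbb{R}$. Hence, after passing to a subsequence, $\P(\tau_n)\to\widehat{\P}$ for some matrix $\widehat{\P}$. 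The specific $\widehat{\P}$ does not matter; it only has to exist.

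\emph{Step 2: pass to the limit at fixed times.} Fix $t>0$. Evaluating the stable estimate at $s=0$ gives
$$\norm{\Phi_{\omega_{\tau_n}}(t,0)\P(\tau_n)}\le K\,\mu_{\tau_n}(t)^{\alpha}.$$
Since $\mu$ is fast, Lem.~\ref{402} gives $\mu_{\tau_n}(t)\to+\infty$, and because $\alpha<0$ the right-hand side tends to $0$. The map $(\omega,t,s)\mapsto\Phi_\omega(t,s)$ is jointly continuous, so $\Phi_{\omega_{\tau_n}}(t,0)\to\Phi_{\widehat{\omega}}(t,0)$. Therefore $\Phi_{\widehat{\omega}}(t,0)\widehat{\P}=0$. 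As $\Phi_{\widehat{\omega}}(t,0)$ is invertible, we get $\widehat{\P}=0$.

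Now fix $t<0$ and use the unstable estimate at $s=0$:
$$\norm{\Phi_{\omega_{\tau_n}}(t,0)[\Id-\P(\tau_n)]}\le K\,\mu_{\tau_n}(t)^{\beta}.$$
Lem.~\ref{402} gives $\mu_{\tau_n}(t)\to 0$, and $\beta>0$, so the right-hand side tends to $0$. The same limit argument yields $\Id-\widehat{\P}=0$. The two conclusions $\widehat{\P}=0$ and $\widehat{\P}=\Id$ are incompatible since $N\ge1$. Hence no such $\widehat{\omega}$ exists, and $\O(\omega)=\A(\omega)=\varnothing$.

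\emph{Trivial projectors and main obstacle.} If $\P\equiv\Id$ or $\P\equiv 0$, the same argument applies with only one of the two estimates. For example, if $\P\equiv\Id$ then $\widehat{\P}=\Id$, and the stable estimate alone gives $\Phi_{\widehat{\omega}}(t,0)=0$, which contradicts invertibility. The step needing most care is Step 2. We need convergence $\Phi_{\omega_{\tau_n}}(t,0)\to\Phi_{\widehat{\omega}}(t,0)$ at fixed $t$, and this is exactly the joint continuity of the cocycle stated in Subsection~\ref{skew}. We also need the constant $K$ not to depend on $n$, which is where uniformity of the dichotomy (Rem.~\ref{49}) is essential. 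Beyond this, we only use the pointwise divergence of $\mu_{\tau_n}(t)$ from Lem.~\ref{402}; no compactness of $\Omega$ is required.
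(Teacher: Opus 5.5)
Your proof is correct and follows the paper's argument: uniform constants along the orbit (Rem.~\ref{49}), pointwise blow-up of $\mu_{\tau_n}$ from Lem.~\ref{402}, joint continuity of the cocycle, and a contradiction with the invertibility of $\Phi_{\widehat{\omega}}$. The one difference is how you get the limiting projector: you extract a convergent subsequence of the matrices $\P(\tau_n)$, which are bounded since $\norm{\P(\tau)}\le K$, and you work only at $s=0$. The paper instead uses the continuous extension of $\P$ to $\H(\omega)$ from Lem.~\ref{97}, whose proof it omits, so your version is slightly more self-contained.
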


\begin{proof}
 By hypothesis, and employing Lem.~\ref{44}, there are constants $\alpha,\beta>0$ and $K\geq 1$, and a continuous map of projections $\P:\mathcal{O}\to \mathbb{R}^{N\times N}$ such that for every $\tau\in \mathbb{R}$ we have
     \[\left\{ \begin{array}{lc}
            \norm{\Phi_{\omega_\tau}(t,s)\P\left((\omega_\tau)_s\right)}&\leq K\left(\frac{\mu_\tau(t)}{\mu_\tau(s)}\right)^\alpha\,\,\,\quad\text{ for }t\geq s,\nonumber\\
            \norm{\Phi_{\omega_\tau}(t,s)[\Id-\P\left((\omega_\tau)_s\right)]}&\leq K\left(\frac{\mu_\tau(t)}{\mu_\tau(s)}\right)^\beta\qquad\text{ for }t\leq s.
            \end{array}
           \right.\]

In particular, as $\mu$ is fast, there is some superexponential rate $\seg_r$, for some $r\in (1,+\infty)$, such that $\mu\succ\seg_r$, which in particular implies $\mu_\tau\succ(\seg_r)_\tau$ for every $\tau\in \mathbb{R}$. This implies that there is some $M\geq 1$ such that for every $\tau\in \mathbb{R}$ we have

     \[\left\{ \begin{array}{lc}
            \norm{\Phi_{\omega_\tau}(t,s)\P\left((\omega_\tau)_s\right)}&\leq KM\left(\frac{\seg_r(t+\tau)}{\seg_r(s+\tau)}\right)^\alpha\,\,\,\quad\text{ for }t\geq s,\nonumber\\
            \norm{\Phi_{\omega_\tau}(t,s)[\Id-\P\left((\omega_\tau)_s)\right)]}&\leq KM\left(\frac{\seg_r(t+\tau)}{\seg_r(s+\tau)}\right)^\beta\qquad\text{ for }t\leq s.
            \end{array}
           \right.\]

Now, consider the continuous extension $\P:\mathbb{H}(\omega)\to\mathbb{R}^{N\times N}$ of the maps of projections, given by Lem.~\ref{97}. If $\widehat{\omega}\in\mathbb{O}(\omega)$, there is a sequence $(\tau_n)_{n\in\mathbb{R}}$, with $\tau_n\nearrow +\infty$, such that $\widehat{\omega}=\lim_{n\to \infty}\omega_{\tau_n}$. Then, by taking limits in the above estimations, in the same fashion as Lem.~\ref{402} we obtain
     \[\left\{ \begin{array}{lc}
            \norm{\Phi_{\widehat{\omega}}(t,s)\P(\widehat{\omega}_s)}&= 0\,\,\,\quad\text{ for }t> s,\nonumber\\
            \norm{\Phi_{\widehat{\omega}}(t,s)[\Id-\P(\widehat{\omega}_s)]}&=0\qquad\text{ for }t< s,
            \end{array}
           \right.\]
which contradicts that $\vPhi$ is non-singular. Thus, such $\widehat{\omega}$ cannot exist. Analogously we prove $\A(\omega)=\varnothing$.
\end{proof}

\begin{corollary}\label{611}
    If for some $\omega\in \Omega$, the equation \eqref{2} has $\mu$-dichotomy, for some fast growth rate (equivalently, $0\not\in \Sigma_\muD(\omega)$), then $\Omega$ is not compact.
\end{corollary}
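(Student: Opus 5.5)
The plan is to argue by contradiction, using Thm.~\ref{602} together with the basic topological dynamics of a flow on a compact space. Suppose that $\Omega$ is compact and that, for some $\omega\in\Omega$, the equation \eqref{2} has $\mu$-dichotomy for a fast growth rate $\mu$. Theorem~\ref{602} then gives $\A(\omega)=\O(\omega)=\varnothing$. The goal is to show that compactness of $\Omega$ forces $\O(\omega)\neq\varnothing$ (or, equally, $\A(\omega)\neq\varnothing$).

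For that step, take the sequence $\tau_n=n$, which is increasing to $+\infty$. The points $\omega_{\tau_n}$ lie in $\Omega$. If $\Omega$ is sequentially compact, some subsequence $\omega_{\tau_{n_k}}$ converges to a point $\widehat{\omega}\in\Omega$. The times $\tau_{n_k}$ still increase to $+\infty$, so by definition $\widehat{\omega}\in\O(\omega)$, which contradicts Thm.~\ref{602}.

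The only delicate point is the passage from compactness to sequential compactness, since the definition of $\O(\omega)$ in the paper uses sequences. In the topologies the paper refers to (\cite{Artstein1,Longo,Longo2,Sell68}), $\Omega$ is metrizable, so the two notions coincide and the argument above applies directly.

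If metrizability is not assumed, I would argue through closed sets instead. The sets $F_n=\overline{\{\omega_t:t\geq n\}}$ form a decreasing family of nonempty closed subsets of a compact space. Hence their intersection is nonempty, and this intersection is the usual omega-limit set. Extracting an actual sequence $\tau_n\nearrow\infty$ with $\omega_{\tau_n}\to\widehat{\omega}$ from a point of this intersection requires first countability, which again holds in the settings under consideration. Alternatively, one can rerun the limiting argument of Thm.~\ref{602} with nets instead of sequences, since continuity of the extended projector and joint continuity of $(\omega,t,s)\mapsto\Phi_\omega(t,s)$ work equally well for nets.

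The conclusion is that $\O(\omega)\neq\varnothing$, contradicting Thm.~\ref{602}, and therefore $\Omega$ is not compact.
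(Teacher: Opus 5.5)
Your proposal is correct and follows the paper's argument. Thm.~\ref{602} empties $\A(\omega)$ and $\O(\omega)$, so the sequence $(\omega_{\tau_n})$ with $\tau_n\nearrow+\infty$ has no convergent subsequence, which is incompatible with (sequential) compactness of $\Omega$. The paper uses the passage from compactness to sequential compactness tacitly, and your remark about needing metrizability or first countability for it makes that assumption explicit.
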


\begin{proof}
    If the alpha and omega limit sets are empty, then the sequences of translated growth rates $\{\omega_{\tau_n}\}_{n\in\mathbb{N}}$, for $\tau_n\searrow -\infty$ or $\tau_n\nearrow +\infty$, have no converging subsequences, which implies that $\Omega$ is not compact.
\end{proof}

\begin{corollary}\label{605}
    If for some $\omega\in \Omega$, the equation \eqref{2} has $\mu$-dichotomy, for some fast growth rate (equivalently, $0\not\in \Sigma_\muD(\omega)$), then $\mathcal{O}(\omega)$ is homeomorphic to $\mathbb{R}$ and coincides with $\mathbb{H}(\omega)$.
\end{corollary}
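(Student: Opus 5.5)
The plan is to combine Thm.~\ref{602} with the topological structure of the orbit. First, by Thm.~\ref{602}, $\A(\omega)=\O(\omega)=\varnothing$. I claim $\H(\omega)=\mathcal{O}(\omega)\cup\A(\omega)\cup\O(\omega)$. Indeed, take $\widehat\omega\in\H(\omega)$. Then there is a sequence $(\tau_n)$ with $\omega_{\tau_n}\to\widehat\omega$. Passing to a subsequence, either $(\tau_n)$ is bounded, or it tends monotonically to $+\infty$ or to $-\infty$. In the bounded case we extract $\tau_n\to\tau$, and continuity of the base flow gives $\widehat\omega=\omega_\tau\in\mathcal{O}(\omega)$. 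In the unbounded cases $\widehat\omega$ lies in $\O(\omega)$ or $\A(\omega)$, which are empty. Hence $\H(\omega)=\mathcal{O}(\omega)$. (A small caveat: $\Omega$ may not be first countable. If needed, I would phrase this with nets, or simply assume metrizability, as is usual for these topologies.)

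Next I consider the map $h:\mathbb{R}\to\mathcal{O}(\omega)$, $h(\tau)=\omega_\tau$. It is continuous and surjective. For injectivity, suppose $\omega_{\tau_1}=\omega_{\tau_2}$ with $T=\tau_2-\tau_1\neq 0$. Then $\omega$ is $T$-periodic under translation, so $\omega_{kT}=\omega$ for all $k\in\mathbb{Z}$. Thus $\omega\in\O(\omega)$ via $\tau_k=k|T|\nearrow\infty$, contradicting $\O(\omega)=\varnothing$. (Alternatively, periodicity would give $\Phi(t+T,s+T)=\Phi(t,s)$, which is incompatible with fast decay by Lem.~\ref{402}.)

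Finally, $h^{-1}$ must be continuous, i.e. $h$ must be a closed or open map onto its image. I expect this to be the main obstacle, and I would argue sequentially. Suppose $\omega_{\tau_n}\to\omega_\tau$. If $(\tau_n)$ had a subsequence diverging to $\pm\infty$, then $\omega_\tau$ would belong to $\O(\omega)$ or $\A(\omega)$, which is impossible. So $(\tau_n)$ is bounded. Every convergent subsequence $\tau_{n_k}\to\sigma$ satisfies $\omega_{\tau_{n_k}}\to\omega_\sigma$, so $\omega_\sigma=\omega_\tau$ by uniqueness of limits (using a Hausdorff $\Omega$). Injectivity then gives $\sigma=\tau$, and hence $\tau_n\to\tau$.

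Equivalently, $h$ is a proper continuous injection: the preimage of a compact set is closed and bounded, since unbounded preimages would produce limit points. It is therefore a closed map, hence a homeomorphism onto $\mathcal{O}(\omega)=\H(\omega)$.
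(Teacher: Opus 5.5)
Your proposal is correct and follows the route the paper leaves implicit: the corollary is stated without proof as a direct consequence of Thm.~\ref{602}, since empty $\A(\omega)$ and $\O(\omega)$ force $\H(\omega)=\mathcal{O}(\omega)$ and make $\tau\mapsto\omega_\tau$ a homeomorphism onto the orbit. You supply the details the paper omits, namely injectivity by ruling out periodicity and continuity of the inverse by the sequential argument, under the same first-countability and Hausdorff assumptions the paper tacitly relies on when it works with sequences.
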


Now we aim to study the complementary results for slow growth rates. 

\begin{theorem}\label{601}
    Consider the skew-product \eqref{1} and chose $\omega\in \Omega$. Suppose the equation \eqref{2} has $\mu$-growth, for some slow growth rate (equivalently, $\pm\infty\not\in \Sigma_\muD(\omega)$). Then, for any $\widehat{\omega}\in \mathbb{A}(\omega)\cup\mathbb{O}(\omega)$, the equation
    \begin{equation}\label{406}
        \dot{x}=A(\widehat{\omega}_t)x(t),
    \end{equation}
    does not have a dichotomy.
\end{theorem}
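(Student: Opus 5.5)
The plan is to pass the uniform $\mu$-growth estimate along the orbit of $\omega$ to the limit point $\widehat{\omega}$. Since $\mu$ is slow, the translated rates $\mu_\tau$ flatten out as $\tau\to\pm\infty$ (Lem.~\ref{401}). The limiting equation \eqref{406} should therefore have a uniformly bounded evolution operator, with every solution bounded on all of $\mathbb{R}$. This is incompatible with any dichotomy by Prop.~\ref{300}~i).

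\textbf{Step 1 (growth along the orbit).} By Lem.~\ref{101} with $\epsilon=0$ (see Rem.~\ref{304}), there are $L\geq 1$ and $a>0$ such that for all $t,s,\tau\in\mathbb{R}$
$$\|\Phi_{\omega_\tau}(t,s)\|\leq L\left(\frac{\mu(t+\tau)}{\mu(s+\tau)}\right)^{\sgn(t-s)a}.$$
Since $\mu$ is slow, $\mu\prec\ueg_r$ for some $r\in(0,1)$, so there is $M\geq1$ with $\mu(t)/\mu(s)\leq M\,\ueg_r(t)/\ueg_r(s)$ for $t\geq s$.

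\textbf{Step 2 (uniform bound for fixed $t,s$ in the limit).} Fix $t,s$ and split into the cases $t\geq s$ and $t<s$. In both cases the right-hand side above is at most
$$L\,M^a\left(\frac{\ueg_r(\max\{t,s\}+\tau)}{\ueg_r(\min\{t,s\}+\tau)}\right)^{a}.$$
Exactly as in the four-case computation of Lem.~\ref{401}, this quotient of $\ueg_r$ tends to $1$ as $\tau\to\pm\infty$, because $(|x+c|+1)^r-(|x|+1)^r\to0$ as $|x|\to\infty$ for $r<1$. Hence $\limsup_{\tau\to\pm\infty}\|\Phi_{\omega_\tau}(t,s)\|\leq LM^a$, a bound independent of $t$ and $s$.

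\textbf{Step 3 (transfer to $\widehat{\omega}$).} Take $\widehat{\omega}\in\A(\omega)\cup\O(\omega)$ and a sequence $\tau_n\to\mp\infty$ with $\omega_{\tau_n}\to\widehat{\omega}$. By joint continuity of $(\omega,t,s)\mapsto\Phi_\omega(t,s)$,
$$\|\Phi_{\widehat{\omega}}(t,s)\|=\lim_n\|\Phi_{\omega_{\tau_n}}(t,s)\|\leq LM^a\quad\text{for all } t,s\in\mathbb{R}.$$
Consequently $\sup_{t\in\mathbb{R}}\|\Phi_{\widehat{\omega}}(t,\tau)x\|<\infty$ for every $(\tau,x)$, so the bounded subbundle of \eqref{406} is $\B=\mathbb{R}\times\mathbb{R}^N$.

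\textbf{Step 4 (contradiction).} Suppose \eqref{406} had a dichotomy, say $\NmuD$ for some growth rate, which includes the uniform and exponential cases. By Cor.~\ref{200} the corresponding translated family would have $\NmuD$. Prop.~\ref{300}~i) would then force $\B=\mathbb{R}\times\{0\}$, contradicting $N\geq1$. One can also see this directly: a nonzero vector in the range of $\P(s)$ (respectively, the kernel) has a solution that is bounded for $t\leq s$ (respectively, $t\geq s$) by Step 3. The growth-rate estimate of the dichotomy, $\bigl(\mu(t)/\mu(s)\bigr)^\alpha\to\infty$ as $t\to-\infty$ (respectively, the $\beta$ estimate), forces that solution to blow up in the other direction, which contradicts boundedness.

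The main obstacle is Step 4's appeal to Prop.~\ref{300}, whose proof the paper leaves as an exercise. I expect to supply at least the short argument for i): a solution bounded on all of $\mathbb{R}$ splits via $\P$ into two pieces. Invariance of the projector together with the dichotomy estimates, letting $s\to-\infty$ for the stable piece and $s\to+\infty$ for the unstable piece, with $\mu(s)^{\sgn(s)\theta}$ dominated because $\alpha+\theta<0$, forces each piece to vanish. Step 2 is routine but needs the limit taken for fixed $t,s$ before the uniform constant $LM^a$ is extracted.
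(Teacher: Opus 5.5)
Your proposal is correct and follows the same route as the paper. You propagate the uniform $\mu$-growth along the orbit (Lem.~\ref{101}), use slowness to bound $\|\Phi_{\omega_\tau}(t,s)\|$ uniformly as $\tau\to\pm\infty$, pass to $\widehat{\omega}$ by joint continuity of $(\omega,t,s)\mapsto\Phi_\omega(t,s)$, and rule out any dichotomy via Prop.~\ref{300}~i). Your Step~2, which bounds the quotient $\mu(\max\{t,s\}+\tau)/\mu(\min\{t,s\}+\tau)$ directly through $\ueg_r$, is slightly more careful than the paper's bare appeal to Lem.~\ref{401}, since that lemma only controls $\mu_\tau(t)=\mu_\tau(t)/\mu_\tau(0)$.
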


\begin{proof}
   By hypothesis and Lem.~\ref{101}, there are constants $a>0$ and $L\geq 1$ such that
$$	\|\Phi_{\omega_\tau}(t,s)\|\leq L\left(\frac{\mu_\tau(t)}{\mu_\tau(s)}\right)^{\sgn(t-s)a},\quad\forall\,t,s\in \mathbb{R},\,\forall\,\omega_\tau\in \mathcal{O}(\omega).$$

Then, by the continuity of both the base flow and $\Psi$, and using Lem.~\ref{401}, there is some $M\geq 1$ such that
    $$\norm{\Phi_{\widehat{\omega}}(t,s)}\leq LM,\qquad\forall\,t,s\in \mathbb{R},\,\forall\,\widehat{\omega}\in \A({\omega})\cup\mathbb{O}({\omega}),$$
    hence all the solutions of \eqref{406} are bounded, which implies, by Prop.~\ref{300}, that there is no dichotomy.
\end{proof}

\begin{corollary}\label{617}
    If for some $\omega\in \Omega$, the equation \eqref{2} has $\mu$-growth, for some slow growth rate (equivalently, $\pm\infty\not\in \Sigma_\muD(\omega)$), then there is no dichotomy over any closed invariant set containing $\omega$.
\end{corollary}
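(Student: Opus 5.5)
Let $\Delta\subset\Omega$ be a closed invariant set containing $\omega$. I will argue by contradiction: suppose the skew-product \eqref{1} restricted to $\Delta$ admits a dichotomy. By this I mean that there is an invariant projector $\P:\Delta\to\mathbb{R}^{N\times N}$ together with dichotomy estimates valid for every $\widehat\omega\in\Delta$, with respect to some growth rate (exponential, or any growth rate in the sense of Definition \ref{dichotomynormal}).

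Restricting to any single fibre then shows that for each $\widehat\omega\in\Delta$ the equation $\dot x=A(\widehat\omega_t)x(t)$ has a dichotomy with projector $s\mapsto\P(\widehat\omega_s)$.

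First step: locate a limit point of $\omega$ inside $\Delta$. Since $\Delta$ is closed and invariant, it contains $\mathbb{H}(\omega)$, and therefore contains $\A(\omega)\cup\O(\omega)$. I need this union to be nonempty, so I will work, as the paper does throughout, with compact $\Omega$ (or assume $\Delta$ compact). The sequence $(\omega_n)_{n\in\mathbb{N}}$ then has a convergent subsequence, and its limit lies in $\O(\omega)\subset\Delta$.

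I expect this to be the only delicate point. If $\Delta=\mathcal{O}(\omega)$ were closed with no limit points, the statement would fail: the orbit itself carries the $\mu$-dichotomy by Cor.~\ref{200}. So I will make the compactness hypothesis explicit, or note that the relevant closed invariant sets are taken inside the compact base.

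Second step: pick $\widehat\omega\in\A(\omega)\cup\O(\omega)\subset\Delta$ and apply Thm.~\ref{601}. It says the equation \eqref{406} has no dichotomy. Concretely, all its solutions are bounded, so $\B=\mathbb{R}\times\mathbb{R}^N$, which is incompatible with Prop.~\ref{300}\,i) for any growth rate.

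On the other hand, the assumed dichotomy over $\Delta$, restricted to the fibre over $\widehat\omega$, gives a dichotomy for \eqref{406}. By Prop.~\ref{300} this forces $\B=\mathbb{R}\times\{0\}$. For $N\ge1$ the two descriptions of $\B$ cannot both hold, and this contradiction proves the corollary.
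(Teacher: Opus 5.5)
Your argument is correct and is essentially the paper's own: the corollary follows from Thm.~\ref{601} by noting that any closed invariant set containing $\omega$ contains $\H(\omega)\supset\A(\omega)\cup\O(\omega)$, and a dichotomy there would restrict to a dichotomy of \eqref{406}, which Prop.~\ref{300} rules out because all its solutions are bounded. Your explicit assumption that these limit sets are nonempty (a compact, in fact sequentially compact, base) is exactly what the paper leaves implicit. One small correction to your side remark: the hypothesis alone gives only $\mu$-growth, so the counterexample on a closed orbit without limit points needs an equation with both slow growth and slow dichotomy, such as Ex.~\ref{550} on its bare orbit.
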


Moreover, arguing analogously to Prop.~\ref{128} (see below), we have the following result.

\begin{corollary}
        If for some $\omega\in \Omega$, the equation \eqref{2} has $\mu$-growth, for some slow growth rate (equivalently, $\pm\infty\not\in \Sigma_\muD(\omega)$), then $\Sigma_{\exp\!\mathrm{D}}(\H(\omega))=\{0\}$.
\end{corollary}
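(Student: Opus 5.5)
The strategy is to show that every equation over the hull $\H(\omega)$ has uniformly bounded evolution operators. This forces every shifted skew-product with $\gamma\neq 0$ to have an exponential dichotomy, while the unshifted one ($\gamma=0$) cannot.

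\emph{Step 1: a uniform bound on the hull.} By the hypothesis and Lem.~\ref{101} with $\epsilon=0$, we have $\|\Phi_{\omega_\tau}(t,s)\|\le L\left(\mu_\tau(t)/\mu_\tau(s)\right)^{\sgn(t-s)a}$ for all $\tau$. Since $\mu\prec \ueg_r$, we can bound $\mu_\tau(t)/\mu_\tau(s)\le M\,\ueg_r(t+\tau)/\ueg_r(s+\tau)$. The right-hand side grows at most like $e^{|t-s|^r}$, uniformly in $\tau$, because $x\mapsto (x+1)^r$ is subadditive-like with derivative at most $r$. So
\[
\|\Phi_{\omega_\tau}(t,s)\|\le LM^a e^{a|t-s|^r}\qquad\text{for all }\tau.
\]
Every $\widehat\omega\in\H(\omega)$ is a limit of translates $\omega_{\tau_n}$. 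By joint continuity of $(\omega,t,s)\mapsto\Phi_\omega(t,s)$, the same subexponential bound holds for every $\widehat\omega\in\H(\omega)$. This already gives exponential bounded growth on $\H(\omega)$ for any exponent $a'>0$, hence $\Sigma_{\exp\mathrm D}(\H(\omega))\subset\{0\}$. The inclusion holds because, for each $\gamma>0$ (resp. $\gamma<0$), the shifted flow \eqref{5} satisfies
\[
\|e^{-\gamma(t-s)}\Phi_{\widehat\omega}(t,s)\|\le C_\gamma e^{-(\gamma/2)(t-s)}\qquad\text{for } t\ge s
\]
(resp. the analogous estimate for $t\le s$). Here $C_\gamma=\sup_{u\ge0}LM^a e^{a u^r-\gamma u/2}<\infty$, since $r<1$. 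This is an exponential dichotomy with projector $\Id$ (resp. $0$), which is trivially continuous. The same argument handles $\gamma=\pm\infty$.

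\emph{Step 2: $0\in\Sigma_{\exp\mathrm D}(\H(\omega))$.} The hull is nonempty and compact-or-not, but it contains $\omega$, so we argue by contradiction. Suppose \eqref{1} restricted to $\H(\omega)$ has an exponential dichotomy with a continuous projector $\P$. I would mimic Prop.~\ref{128}. The alternative route, used in Thm.~\ref{601}, takes $\widehat\omega\in\A(\omega)\cup\O(\omega)$ and shows its equation has bounded solutions; by Prop.~\ref{300}(i), an exponential dichotomy at $\widehat\omega$ would then force $\B=\mathbb R\times\{0\}$, a contradiction. That route needs a nonempty limit set, which is not guaranteed. The robust route is at $\omega$ itself. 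Restricting the dichotomy to the orbit gives an exponential dichotomy for \eqref{2} with constants $K,\alpha<0$. Pick $x\neq0$ in $\rang\P(\omega)$; the case $\P=0$ is symmetric, using $t\to-\infty$. Then $\|\Phi_\omega(t,0)x\|\le Ke^{\alpha t}\|x\|$. On the other hand, $\Phi_\omega(0,t)=\Phi_\omega(t,0)^{-1}$ satisfies the growth bound $\|\Phi_\omega(0,t)\|\le L\,\mu(t)^{a}$, so
\[
\|x\|\le L\mu(t)^a\|\Phi_\omega(t,0)x\|\le LK\mu(t)^a e^{\alpha t}\|x\|.
\]
Since $\mu\prec\ueg_r$ gives $\mu(t)^a\le M^a e^{a((t+1)^r-1)}$, the right-hand side tends to $0$ as $t\to+\infty$, which is a contradiction.

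The main obstacle is this second step, specifically ensuring the contradiction is obtained without assuming nonempty limit sets and handling both the stable and unstable parts. The growth estimate, used in both time directions at the single point $\omega$, settles it.
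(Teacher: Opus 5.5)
Your argument is correct, and it reaches $0\in\Sigma_{\exp\!\mathrm{D}}(\H(\omega))$ by a different route from the paper.

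Step 1 is what the paper intends when it says to argue as in Prop.~\ref{128}. There the growth bound is transported to all of $\H(\omega)$, and one shows that for $\gamma>0$ (resp.\ $\gamma<0$) the shifted flow has an exponential dichotomy with projector $\Id$ (resp.\ $0$). You make explicit the uniform-in-$\tau$ estimate through $\ueg_r$. The paper leaves this implicit, but it is genuinely needed, since every $\mu_\tau$ is a different rate.

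The difference lies in the inclusion $0\in\Sigma_{\exp\!\mathrm{D}}(\H(\omega))$. The paper's route, following the last line of Prop.~\ref{128}, gets it indirectly from the nonemptiness part of the spectral theorem (Thm.~\ref{126}). Once every $\gamma\neq 0$ and $\pm\infty$ lie in the resolvent, a nonempty spectrum must equal $\{0\}$.

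You prove it directly instead. The backward estimate $\|\Phi_\omega(0,t)\|\le L\mu(t)^{\sgn(t)a}$ bounds $\|\Phi_\omega(t,0)x\|$ from below by a subexponentially decaying quantity. That is incompatible with exponential contraction on $\rang\P(\omega)$ or exponential expansion on $\ker\P(\omega)$.

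The paper's route is shorter once the spectral machinery is granted. Yours is elementary and needs no spectral theorem. It also needs no nonempty limit sets, unlike an argument through Thm.~\ref{601}. It makes clear why slow growth excludes exponential hyperbolicity. Combined with your Step 1 bound on the whole hull, it even shows that no single equation over $\H(\omega)$ has an exponential dichotomy.

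A small correction in Step 1. The estimate should read $\|\Phi_{\omega_\tau}(t,s)\|\le LM^a e^{2^{1-r}a|t-s|^r}$. When $t+\tau$ and $s+\tau$ have opposite signs, one only has $x^r+y^r\le 2^{1-r}(x+y)^r$.

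The justification also has to be the subadditivity (concavity) of $u\mapsto u^r$. The derivative bound $r$ alone gives only $e^{ar|t-s|}$, which would produce dichotomies just for $|\gamma|>ar$. Neither point affects the finiteness of $C_\gamma$.
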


\begin{remark}\label{extend}
{\rm
For a general function $\omega: \mathbb{R} \to \mathbb{R}^{N \times N}$, it is not obvious whether there exists a growth rate $\mu$ such that equation \eqref{01} exhibits $\mu$-growth, not even $\Nmu$-growth. Even when a $\mu$-growth is present, it is not guaranteed that $\mu$ falls into one of the types we have studied: exponential, slow, or fast.

\smallskip

If we restrict our attention to those $\omega$ for which equation \eqref{01} exhibits $\mu$-growth with $\mu$ being either exponential, slow, or fast, then Thm.~\ref{601} characterizes the case of slow growth, while Thm.~\ref{602} addresses the case of fast dichotomy. Still, these results do not exhaust all possibilities: an equation may fail to exhibit slow growth and yet also not admit any form of fast dichotomy. In other words, the cases we have analyzed are not exhaustive. Nonetheless, they capture a broad and relevant class of systems — especially given that on one hand, many classical results require both bounded growth and dichotomy in order to derive conclusions, and on the other hand, most of the nonexponential behaviors present in current literature are either slow or fast.
}
\end{remark}


\subsection{Bounded, periodic and almost periodic functions}\label{periodic}

Bounded continuous functions defined on $\mathbb{R}$—and especially those that are almost periodic—possess the notable property that they can always be continuously extended to a compactification of $\mathbb{R}$. By Gelfand’s theory \cite[Chap.~4]{Pedersen}, if $\omega \in \Omega$ is bounded, then there exists a suitable function space and a compatible metric topology in which the orbit $\mathcal{O}(\omega)$ is precompact.

\smallskip

For almost periodic functions \cite{Bohr}, one can use the Bohr compactification \cite{Sacker2}; for arbitrary bounded continuous functions, the general setting is covered by the Stone–Čech compactification. In both cases, the limit sets $\A(\omega)$ and $\O(\omega)$ are nonempty, and the associated hull $\H(\omega)$ consists entirely of bounded functions. This is why bounded continuous functions—and in particular, almost periodic ones—are of special interest when studying skew-product flows. Traditionally, such equations have been studied through the lens of exponential dichotomy and growth. In this subsection, we examine how our framework and results apply in this classical setting.

\begin{proposition}\label{128}
    Choose a fast growth rate $\mu$ and ${\omega}\in \Omega$ such that \eqref{2} has exponential bounded growth. Then, $\Sigma_{\NmuD}(\hat{\omega})=\Sigma_{\muD}(\hat{\omega})=\{0\}$ for every $\hat{\omega}\in \H(\omega)$.
\end{proposition}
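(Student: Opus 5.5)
Throughout, write $\Phi=\Phi_{\hat\omega}$, let $\Phi^{\mu,\gamma}$ denote the evolution operator of the $(\mu,\gamma)$-shifted equation \eqref{001} built from $\hat\omega$, and let $a>0$, $L\geq 1$ be the exponential growth constants of $\omega$.

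\textbf{Step 1: exponential growth passes to the hull.} Exponential growth is translation invariant, since the exponential rate is. So $\|\Phi_{\omega_\tau}(t,s)\|\leq Le^{a|t-s|}$ for every $\tau$. Joint continuity of $(\omega,t,s)\mapsto\Phi_\omega(t,s)$ then gives $\|\Phi_{\hat\omega}(t,s)\|\leq Le^{a|t-s|}$ for every $\hat\omega\in\H(\omega)$. It therefore suffices to prove the following for a single equation $\hat\omega$ with exponential bounded growth: $\gamma\in\rho_\NmuD(\hat\omega)$ if and only if $\gamma\neq 0$. The uniform case is then automatic, because $\Sigma_\NmuD(\hat\omega)\subset\Sigma_\muD(\hat\omega)$ (every $\muD$ is a $\NmuD$).

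\textbf{Step 2: every $\gamma\neq0$ lies in the resolvent set.} Since $\Phi^{\mu,\gamma}(t,s)=(\mu(s)/\mu(t))^{\gamma}\Phi(t,s)$, we have
$$\|\Phi^{\mu,\gamma}(t,s)\|\leq L\,e^{a|t-s|}\left(\frac{\mu(t)}{\mu(s)}\right)^{-\gamma}.$$
Suppose $\gamma>0$, take $\P\equiv\Id$ and $t\geq s$. Because $\mu\gg\exp$ (fast implies faster than exponential, and $\exp$ has the property \eqref{803}), the factor $e^{a(t-s)}$ can be absorbed into $(\mu(t)/\mu(s))^{\gamma/2}$ up to a constant $M$. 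This yields a uniform $\mu$-dichotomy $(\Id;LM,-\gamma/2,*)$. For $\gamma<0$ take $\P\equiv 0$ and argue symmetrically for $t\leq s$. The precise tool here is the faster relation \eqref{803} applied with $\walpha=-a$, suitably inverted; checking that the inequality is oriented correctly is a routine verification. This also covers $\pm\infty$: a $\muD$ at some finite $\hat\gamma$ of the required type exists, so $\pm\infty\notin\Sigma$.

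\textbf{Step 3: $0\in\Sigma_\NmuD(\hat\omega)$; this is the main obstacle.} At $\gamma=0$ the shifted equation is just $\dot x=A(\hat\omega_t)x$. Suppose it had a $\NmuD$ with projector $\P$ and constants $\alpha<0$, $\theta\geq0$, $\alpha+\theta<0$. Pick a nonzero $x\in\rang\P(0)$ if the range is nontrivial. For $t\geq 0$ the estimate gives $\|\Phi(t,0)x\|\leq K\mu(t)^{\alpha}|x|$. Exponential growth backwards gives $|x|\leq Le^{at}\|\Phi(t,0)x\|$, hence $1\leq LKe^{at}\mu(t)^{\alpha}$. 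This is impossible as $t\to\infty$, because $\mu(t)^{-\alpha}$ outgrows every exponential; indeed $\mu\succ\seg_r$ gives $\mu(t)\geq M^{-1}e^{t^r}$. The same argument on $\ker\P(0)$ with $t\to-\infty$ uses $\mu(t)\leq Me^{-|t|^r}$ for $t<0$. Only $s=0$ is used, so the nonuniform factor $\mu(s)^{\sgn(s)\theta}$ equals $1$ and causes no trouble. Since $\rang\P(0)\oplus\ker\P(0)=\mathbb{R}^N\neq\{0\}$, one of the two subspaces is nontrivial, which gives the contradiction. Hence $\Sigma_\NmuD(\hat\omega)=\Sigma_\muD(\hat\omega)=\{0\}$.
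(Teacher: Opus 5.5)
Your proof is correct, and it differs from the paper's in how it shows $0\in\Sigma$.

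Steps 1 and 2 match the paper. Exponential growth passes to the hull by translation invariance and joint continuity. For $\gamma\neq0$, the factor $e^{a|t-s|}$ is absorbed into $(\mu(t)/\mu(s))^{|\gamma|/2}$ via \eqref{803}, which gives a uniform $\mu$-dichotomy with projector $\Id$ or $0$, and hence also $\pm\infty\notin\Sigma$.

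The paper never examines $\gamma=0$ directly. It closes with Thm.~\ref{126}, whose relevant content is that the spectrum is a union of $m\geq1$ intervals. Since it is nonempty and every $\gamma\neq0$ lies in the resolvent set, it must equal $\{0\}$.

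You instead rule out a $\NmuD$ at $\gamma=0$ by hand. You evaluate the dichotomy estimates at $s=0$, where the nonuniform factor $\mu(0)^{\sgn(0)\theta}=1$ drops out, and combine them with the backward bound $\|\Phi(0,t)\|\leq Le^{a|t|}$. This forces $1\leq LKe^{a|t|}\mu(t)^{\alpha}$ for $t\geq0$ if $\rang\P(0)\neq\{0\}$, or $1\leq LKe^{a|t|}\mu(t)^{\beta}$ for $t\leq0$ if $\ker\P(0)\neq\{0\}$. Both are impossible for a fast rate.

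Your route is self-contained and does not rely on the nonemptiness part of Silva's spectral theorem with its $\pm\infty$ conventions. It also makes the mechanism explicit: a fast dichotomy is incompatible with exponential bounded growth. The paper's route is shorter.

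Your Step 3 actually needs only $\mu\gg\exp$: apply \eqref{803} with one of the two times equal to $0$ and $\walpha=-2a$. So both arguments prove the statement for every rate faster than exponential.

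One small wording point. The inclusion $\Sigma_\NmuD\subset\Sigma_\muD$ by itself only yields $0\in\Sigma_\muD$. The inclusion $\Sigma_\muD\subset\{0\}$ comes from the dichotomies in Step 2 being uniform, which you do state, so nothing is missing.
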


\begin{proof}
If \eqref{2} has exponential bounded growth with constants $L\geq 1$ and $a> 0$, then every equation on the skew-product \eqref{1} also has exponential bounded growth with the same constants. Then, by the continuity of the base flow, this bounded growth propagates to the complete hull $\H(\omega)$, {\it i.e.}
    $$\norm{\Phi_{\widehat{\omega}}(t,s)}\leq e^{a|t-s|},\qquad\forall\,t,s\in \mathbb{R},\,\forall\,\widehat{\omega}\in \H(\omega).$$

Choose $\gamma>0$. Then, there is some $M\geq 1$ such that $e^{a(t-s)}\leq M  \left(\frac{\mu_\omega(t)}{\mu_\omega(s)}\right)^{\gamma/2}$ for every $\omega\in \mathcal{O}(\hat{\omega})$. Then, we have
    $$\norm{\Phi^{\mu,\gamma}_{\widehat{\omega}}(t,s)}\leq M \left(\frac{\mu(t)}{\mu(s)}\right)^{-\gamma/2},\qquad\forall\,t\geq s,\,\forall\,\widehat{\omega}\in \H({\omega}),$$
    which states that the $(\mu,\gamma)$-shifted equation has $\mu$-dichotomy with projector $\Id$.  Analogously it is proved that for every $\gamma<0$ the $(\mu,\gamma)$-shifted equation has $\mu$-dichotomy with projector $0$. Finally, by Thm.~\ref{126} we obtain the conclusion.
\end{proof}

Note that the above proposition applies to all bounded or essentially bounded functions, and in particular to continuous Bohr's almost periodic maps, which are always bounded. We now turn to examining how such equations interact with slow growth rates. We begin by addressing the case of periodic functions.

\begin{proposition}\label{613}
        If $\omega$ is a piece-wise continuous periodic function and $\mu$ is a slow growth rate, then the only possible spectral intervals in $\Si_\muD(\widehat{\omega})$ and $\Sigma_\NmuD(\widehat{\omega})$ are $\{0\}$, $\{+\infty\}$ and $\{-\infty\}$ for every $\widehat{\omega}\in \H(\omega)$.
\end{proposition}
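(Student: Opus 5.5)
Since $\omega$ is periodic (say with period $T>0$), its hull is just its orbit, $\H(\omega)=\mathcal{O}(\omega)$. So every $\widehat{\omega}\in\H(\omega)$ is a translate $\omega_\tau$, and it is itself a $T$-periodic, piecewise continuous, hence bounded, function. By Thm.~\ref{126} the spectrum of $\widehat{\omega}$ equals that of the family of translates, so it suffices to analyse one fixed periodic equation $\dot{x}=A(\widehat{\omega}_t)x$.

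\textbf{Step 1 (Floquet reduction).} By Floquet theory, $\Phi_{\widehat{\omega}}(t,0)=Q(t)e^{Bt}$ with $Q$ periodic, invertible and bounded together with $Q^{-1}$ (using $2T$ and a real logarithm if necessary). The time-dependent change of coordinates $Q(t)$ preserves $\muD$ and $\NmuD$, up to the constant $K$. Hence I may replace the equation by the autonomous one $\dot{y}=By$. Splitting $\mathbb{R}^N$ into the generalized eigenspaces of $B$ grouped by the real parts $\lambda_1<\dots<\lambda_k$ of the eigenvalues, the system decouples into invariant blocks. On the block with real part $\lambda_i$,
\[
\|e^{Bt}|_{E_i}\|\le C(1+|t|)^{N}e^{\lambda_i t}.
\]

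\textbf{Step 2 (blocks against a slow rate).} Fix $\gamma\in\mathbb{R}$ and look at the shifted block, with evolution $e^{B(t-s)}(\mu(t)/\mu(s))^{-\gamma}$.

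\emph{Case $\lambda_i<0$.} Since $\mu$ is slow, $\mu\ll\exp$, so for any $\gamma$ the polynomial-times-exponential decay $e^{\lambda_i(t-s)}$ dominates $(\mu(t)/\mu(s))^{|\gamma|+1}$ up to a constant. For $t\ge s$ this gives
\[
\|\cdot\|\le M\left(\frac{\mu(t)}{\mu(s)}\right)^{-1}.
\]
So this block is contracting for every finite $\gamma$, i.e.\ its spectrum is $\{-\infty\}$.

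\emph{Case $\lambda_i>0$.} Symmetrically, the block is expanding for every finite $\gamma$, so its spectrum is $\{+\infty\}$.

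\emph{Case $\lambda_i=0$.} The block has polynomial growth $C(1+|t-s|)^{N}$. Using $p\ll\ueg_r$ and $\mu\prec\ueg_r$, I must show that for $\gamma>0$ the block is contracting at $\mu$-rate $-\gamma/2$, and expanding for $\gamma<0$. That would give spectrum $\{0\}$ for this block.

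\textbf{Step 3 (assemble).} The spectrum of a direct sum of invariant blocks with bounded projections is the union of the blocks' spectra. Hence $\Sigma_\muD(\widehat{\omega})\subset\{-\infty,0,+\infty\}$. Moreover $\Sigma_\NmuD\subset\Sigma_\muD$, since uniform dichotomy implies nonuniform dichotomy. So the spectral intervals can only be these singletons.

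\textbf{Main obstacle.} The delicate point is the $\lambda_i=0$ block. There I need the polynomial factor $(1+|t-s|)^N$ to be absorbed by $(\mu(t)/\mu(s))^{\gamma/2}$, uniformly in $s$. For $s$ far out, translated slow rates flatten (Lem.~\ref{401}), so $\mu(t)/\mu(s)$ over a window of fixed length stays bounded. This means the polynomial growth may \emph{not} be dominated.

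I would try first to show this case forces the block to be bounded when it matters. If $B$ has a nontrivial Jordan block on the imaginary axis, the claim may fail, and then I would argue via Lem.~\ref{401} that the $\muD$ estimates cannot hold for any $\gamma$ near $0$. Either way the spectral component containing $0$ must still be a single point; if it is not, the statement might need the interval to degenerate.

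The cleaner route I would actually attempt is different. When $\lambda_i=0$, shifting by any finite $\gamma$ cannot produce a dichotomy on an unbounded region unless $\gamma$ has a fixed sign, because of the flattening of $\mu_\tau$. Thus the only gap structure left is the sign-split around $0$, which yields the $\{0\}$ component.
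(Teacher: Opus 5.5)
\paragraph{Verdict.} Your Floquet reduction, splitting of $B$ by the real parts of its eigenvalues, treatment of the hyperbolic blocks (each contributes only $\{-\infty\}$ or $\{+\infty\}$, since $\mu\ll\exp$ absorbs polynomial-times-exponential factors), union of block spectra and use of $\Sigma_\NmuD\subset\Sigma_\muD$ are all sound. This is finer than the paper's proof. The paper only separates the case ``$\dot x=Bx$ has nontrivial bounded solutions'', where it merely records $0\in\Sigma_\muD$, from the case of an exponential dichotomy, where, as in your hyperbolic blocks, every finite $\gamma$ is in the resolvent. It never shows that the spectral interval through $0$ is $\{0\}$, and never treats a $B$ with both central and hyperbolic parts.

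\paragraph{Where the argument breaks.} The genuine gap is the $\lambda_i=0$ block you flagged. If $B$ restricted to $E_0$ is semisimple, the block is harmless. Then $\|e^{Bt}|_{E_0}\|\le C$, so for $\gamma>0$ you get $\|\Phi^{\mu,\gamma}(t,s)|_{E_0}\|\le C(\mu(t)/\mu(s))^{-\gamma}$ for $t\ge s$, which is a $\mu$-dichotomy with projector $\Id$. The case $\gamma<0$ is symmetric, and the block contributes exactly $\{0\}$.

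With a nontrivial Jordan block on the imaginary axis, neither ``either way the component containing $0$ is a single point'' nor the ``sign-split'' route can work, because the conclusion is false there. Take $\omega\equiv J=\bigl(\begin{smallmatrix}0&1\\0&0\end{smallmatrix}\bigr)$, which is constant and hence periodic, and any slow $\mu$.
\begin{itemize}
\item A $\mu$-dichotomy of the $(\mu,\gamma)$-shifted equation with projector $\Id$ would give
\[
h\le\|e^{Jh}\|\le K\Bigl(\frac{\mu(s+h)}{\mu(s)}\Bigr)^{\alpha+\gamma},\qquad s\in\mathbb{R},\ h\ge 0 .
\]
By Lem.~\ref{401}, for each fixed $h$ the ratio lies in $[1,M+1]$ for all large $s$. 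Letting $s\to+\infty$ gives $h\le K\max\{1,(M+1)^{\alpha+\gamma}\}$ for every $h\ge 0$, which is absurd.
\item The projector $0$ fails in the same way.
\item A rank-one invariant projector satisfies $\P(s)=e^{Js}\P(0)e^{-Js}$. Its range and kernel are images under $e^{Js}$ of two fixed complementary lines, each equal to or converging to $\mathbb{R}(1,0)$ as $s\to+\infty$. This forces $\|\P(s)\|\to\infty$, contradicting $\|\P(s)\|\le K$.
\end{itemize}
Hence no $\gamma\in\mathbb{R}$ lies in the resolvent, and by the convention neither do $\pm\infty$, so $\Sigma_\muD(\omega)=\overline{\mathbb{R}}$. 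The flattening of $\mu_\tau$ does not split the spectrum by the sign of $\gamma$; it destroys every gap.

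Even the nonuniform spectrum is too large. For $\mu=p$ and $0<\gamma<1$, the shifted equation has $\S(0)=\mathbb{R}(1,0)$ and $\U(0)=\{0\}$, and symmetrically for $-1<\gamma<0$. So Prop.~\ref{300} gives $(-1,1)\subset\Sigma_\NmuD(\omega)$.

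\paragraph{Consequence.} The missing step cannot be supplied. Your argument, like the paper's, proves the statement only under an extra hypothesis: the Floquet exponents with zero real part must be semisimple, i.e.\ all solutions in the central part are bounded.
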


\begin{proof}
We prove that $\{0\}$, $\{+\infty\}$ and $\{-\infty\}$ are the only spectral gaps for the uniform spectrum, and the result for the nonuniform case follows from $\Sigma_\NmuD\subset \Sigma_\muD$. First, note that  every $\widehat{\omega}\in \H(\omega)$ is also a piece-wise continuous periodic function, hence by Floquet's theorem \cite{Floquet}, $\dot{x}=\widehat{\omega}(t)x(t)$ is kinematically similar to an autonomous equation $\dot{x}=Bx$. If this equation has nontrivial bounded solutions, then the former also has them, thus $0\in \Si_\muD(\omega)$.  

   \smallskip

   On the other hand, if $\dot{x}=Bx$ has no bounded solutions, then $\dot{x}=\widehat{\omega}(t)x(t)$ has exponential dichotomy, since they share the exponential dichotomy spectrum. Suppose the parameters of the dichotomy are $(\P;K,\alpha,\beta)$. 

   \smallskip

   Let $\gamma\in \mathbb{R}$ and choose $\walpha<\min\{\gamma,0\}$ and $\wbeta>\max\{\gamma,0\}$. Suppose first that $0\neq\P\neq \Id$. As $\mu$ is slow, there is $M\geq 1$ such that
 \[   e^{\alpha(t-s)}\leq M\left(\frac{\mu(t)}{\mu(s)}\right)^{\widetilde{\alpha}},\quad \forall\,t\geq s\qquad \text{and} 
\qquad e^{\beta(t-s)}\leq M\left(\frac{\mu(t)}{\mu(s)}\right)^{\widetilde{\beta}},\quad \forall\,s\geq t.\]

    Then, we have
    \begin{align*}
        \norm{\Phi^{\mu,\gamma}_{\widehat{\omega}}(t,s)\P(s)}&=\norm{\Phi_{\widehat{\omega}}(t,s)\P(s)}\cdot\left(\frac{\mu(t)}{\mu(s)}\right)^{-\gamma}\\
        &\leq  KM\left(\frac{\mu(t)}{\mu(s)}\right)^{\widetilde{\beta}-\gamma},\quad \forall\,t\leq s,
    \end{align*}
    and similarly
    \begin{align*}
\norm{\Phi^{\mu,\gamma}_{\widehat{\omega}}(t,s)[\Id-\P(s)]}\leq KM\left(\frac{\mu(t)}{\mu(s)}\right)^{\widetilde{\alpha}-\gamma},\quad \forall\,t\geq s,
    \end{align*}
    hence the $(\mu,\gamma)$-shifted equation has $\mu$-dichotomy, or equivalently $\gamma\in \rho_{\mu\mathrm{D}}(\widehat{\omega})$. Since $\gamma\in \mathbb{R}$ was arbitrary,  $0\neq\P\neq \Id$ and the projectors associated to dichotomies are unique, then $\Sigma_{\mu\mathrm{D}}(\widehat{\omega})\subset \{\pm\infty\}$. Finally, if $\P=\Id$, an analogous argument shows $\Sigma_\muD(\widehat{\omega})=\{-\infty\}$, and if $\P=0$ then $\Sigma_\muD(\widehat{\omega})=\{+\infty\}$.
\end{proof}

The conclusions from Prop.~\ref{613} also apply to a larger class of equations, those for which always one, and only one, of the following statements is true: i) either has exponential dichotomy, or, ii) there are nontrivial bounded solutions. Note, however, that the result above does not fully capture the range of behaviors that can arise from almost periodic maps, as these may exhibit more intricate dynamics. To the best of our knowledge, the only attempt at developing a Floquet-type theory in the almost periodic setting was made in \cite{Russel}, restricted to the two-dimensional case.

\subsection{The nonuniform exponential case}

So far, we have described—at least to some extent—the behavior of limit points in skew-product systems for both fast and slow growth rates. We now focus on the nonuniform exponential case, which remains less explored compared to the classical (uniform) exponential regime that has been extensively studied in the literature. In this context, we obtain the following characterization:

\begin{theorem}\label{615}
Choose $\omega\in \Omega$. If \eqref{2} does not present uniform exponential growth (equivalently, $\pm\infty\in\Sigma_{\exp\!\mathrm{D}}(\omega)$), then $\mathbb{A}(\omega)=\mathbb{O}(\omega)=\varnothing$. 
\end{theorem}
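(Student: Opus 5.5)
The plan is to argue by contraposition. Assume $\mathbb{A}(\omega)\cup\mathbb{O}(\omega)\neq\varnothing$, and show that \eqref{2} then has uniform exponential growth. Equivalently, by the spectral convention recalled before Thm.~\ref{126}, this shows $\pm\infty\notin\Sigma_{\exp\!\mathrm{D}}(\omega)$. I read the theorem in the context of this subsection, so I take \eqref{2} to have at least nonuniform exponential growth. By Lem.~\ref{101} (with $\mu=\exp$, which is invariant under translations) the orbit then carries bounds $\|\Phi_{\omega_\tau}(t,s)\|\le Le^{3\epsilon|\tau|}e^{a|t-s|}e^{\epsilon|s|}$. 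This makes the failure of uniformity an unboundedness of the constants $L_\tau$ as $|\tau|\to\infty$. The proof then reduces to showing that the existence of a limit point prevents this blow-up.

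The key tool is the cocycle identity $\Phi_\omega(t,s)=\prod_k\Phi_{\omega_{s+k}}(1,0)$ (on unit steps, with a final fractional step). Together with it I use the joint continuity of $(\omega,t,s)\mapsto\Phi_\omega(t,s)$ listed in Subsection~\ref{skew}. Let $\widehat\omega=\lim_n\omega_{\tau_n}$ be a limit point. Continuity at $(\widehat\omega,t,0)$ for $t\in[-1,1]$, plus compactness of $[-1,1]$, gives a neighbourhood $V$ of $\widehat\omega$ and $C\ge1$ with $\|\Phi_\eta(t,0)\|\le C$ for all $\eta\in V$ and $|t|\le 1$. Continuity of the base flow then gives $\omega_{\tau}\in V$ for all $\tau$ in intervals around $\tau_n$ of a fixed length. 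Chaining unit steps yields $\|\Phi_\omega(t,s)\|\le C^{|t-s|+1}$ whenever the whole segment $\{\omega_r: r\in[s,t]\}$ remains in such a set. This gives exponential growth with rate $a=\log C$ and constant $L=C$, independent of the base point.

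The final step is to upgrade this local estimate to all $t,s\in\mathbb{R}$, and this is where I expect the main obstacle. The unit-time bound must hold uniformly along the whole orbit, not only near the times $\tau_n$. My first attempt will be to use that $\mathbb{H}(\omega)=\mathcal{O}(\omega)\cup\mathbb{A}(\omega)\cup\mathbb{O}(\omega)$. I will then show that a nonempty limit set forces $\sup_{\tau}\sup_{|t|\le1}\|\Phi_{\omega_\tau}(t,0)\|<\infty$. The argument is by contradiction: pick $\tau_k$ with $\|\Phi_{\omega_{\tau_k}}(t_k,0)\|\to\infty$, $|t_k|\le1$. Since $\|\Phi_{\omega_{\tau_k}}(t_k,0)\|\le Le^{a}e^{(3\epsilon+\epsilon)|\tau_k|}$, necessarily $|\tau_k|\to\infty$. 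If $(\omega_{\tau_k})$ had a convergent subsequence, joint continuity would contradict the blow-up. The difficulty is that in a noncompact $\Omega$ this subsequence need not exist. I would therefore try to show directly that the blow-up of $L_\tau$ forces every sequence $\omega_{\tau_n}$ with $|\tau_n|\to\infty$ to leave every neighbourhood on which $\Phi$ is locally bounded. Concretely, the aim is to show that the nonuniform constants grow along every unbounded sequence of times, as in Example~\ref{0560}. This would contradict convergence to $\widehat\omega$, by the same limiting argument as in Thm.~\ref{602}, giving $\mathbb{A}(\omega)=\mathbb{O}(\omega)=\varnothing$.
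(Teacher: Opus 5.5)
\textbf{There is a genuine gap, and it cannot be closed.} You locate the obstacle correctly, but it is not a technicality. Your local estimate near $\widehat\omega$ only controls $\Phi_\omega$ on time windows where the orbit stays close to $\widehat\omega$, i.e.\ near the times $\tau_n$. The whole content of the theorem therefore sits in your announced last step: that blow-up of the unit-time norms $\sup_{|t|\le 1}\|\Phi_{\omega_\tau}(t,0)\|$ along \emph{some} sequence of times forces \emph{every} sequence $(\omega_{\tau_n})$ with $|\tau_n|\to\infty$ to diverge.

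That claim is false, even under the nonuniform exponential growth hypothesis you add (which the statement does not contain). Take $N=1$, $\omega(t)=\max\{t,0\}\sin t$, and $\Omega=\mathcal{O}(\omega)\cup\{0\}$ with the compact-open topology, as in the paper's revisited examples.
\begin{itemize}
\item No uniform exponential growth: since $\int_{2\pi k}^{2\pi k+\pi} r\sin r\,dr=(4k+1)\pi$, one has $\|\Phi_\omega(2\pi k+\pi,2\pi k)\|=e^{(4k+1)\pi}$.
\item Nonuniform exponential growth holds: $|\int_s^t\omega(r)\,dr|\le 2+|t|+|s|$ gives $\|\Phi_\omega(t,s)\|\le e^{2}e^{|t-s|}e^{2|s|}$.
\item A limit point exists: $\omega_\tau$ vanishes identically on $[-R,R]$ as soon as $\tau<-R$. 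So $\omega_\tau\to 0$ as $\tau\to-\infty$ and $\mathbb{A}(\omega)=\{0\}\neq\varnothing$; only $\mathbb{O}(\omega)$ is empty.
\end{itemize}
So the statement as written fails, and no argument can complete your final step.

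\textbf{Comparison with the paper's route.} The paper argues differently. If $\omega$ were uniformly locally integrable, Gronwall would give uniform exponential growth. Hence there are $t_0>0$ and a sequence with $\tau_n\to+\infty$ or $\tau_n\to-\infty$ such that $\int_0^{t_0}\|\omega_{\tau_n}(s)\|\,ds\to\infty$, and the paper concludes that $(\omega_{\tau_n})$ has no limit in $\Omega$. This also excludes limits only along the single sequence it constructs (in the example above, one with $\tau_n\to+\infty$). So it does not contain the idea you are missing.

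\textbf{What can be salvaged.} Along either route one can prove the weaker conclusion that $\mathbb{H}(\omega)$ is not compact, and your approach does this cleanly using only joint continuity of the cocycle. If $\mathbb{H}(\omega)$ were compact, joint continuity of $(\eta,t)\mapsto\Phi_\eta(t,0)$ on $\mathbb{H}(\omega)\times[-1,1]$ would give $C:=\sup\|\Phi_\eta(t,0)\|<\infty$. Your unit-step chaining would then yield $\|\Phi_\omega(t,s)\|\le C^{|t-s|+1}$ for all $t,s$, i.e.\ uniform exponential growth. Your extra nonuniform growth assumption is also unnecessary in the contradiction step: $|\tau_k|\to\infty$ already follows from joint continuity on the compact set $\{\omega_\tau:|\tau|\le T\}\times[-1,1]$.
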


\begin{proof}
We know $\omega$ is locally integrable. If it were uniformly locally integrable, that is, if there were some $M>0$ such that 
    $$\frac{1}{t}\int_\tau^{\tau+t}\norm{\omega(s)}ds\leq M,\qquad\forall\,t,\tau\in \mathbb{R},$$
then \eqref{2} would have uniform exponential growth. Therefore, we could find some $t_0>0$ and a sequence $(\tau_n)_{n\in \mathbb{N}}$, verifying $\tau_n\searrow -\infty$ or $\tau_n\nearrow +\infty$, such that
    $$\lim_{n\to \infty}\frac{1}{t_0}\int_{\tau_n}^{\tau_n+t_0}\norm{\omega(s)}ds=\lim_{n\to \infty}\frac{1}{t_0}\int_{0}^{t_0}\norm{\omega_{\tau_n}(s)}ds=+\infty,$$
    then, such $\widehat{\omega}=\lim_{n\to \infty}\omega_{\tau_n}$ is not locally integrable, therefore not in $\Omega$.
\end{proof}

An immediate consequence follows from the previous result.

\begin{corollary}\label{616}
  Choose $\omega\in \Omega$. If \eqref{2} does not present uniform exponential growth (equivalently, $\pm\infty\in\Sigma_{\exp\!\mathrm{D}}(\omega)$), then $\mathcal{O}(\omega)=\H(\omega)$ is homeomorphic to $\mathbb{R}$. In particular, $\Omega$ is not compact.
\end{corollary}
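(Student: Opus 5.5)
The plan is to obtain Corollary~\ref{616} directly from Theorem~\ref{615}, mirroring Corollaries~\ref{611} and~\ref{605}. By Theorem~\ref{615} we already have $\A(\omega)=\O(\omega)=\varnothing$. First we show $\H(\omega)=\mathcal{O}(\omega)$. Take $\widehat{\omega}\in\H(\omega)$; then $\widehat{\omega}=\lim_n\omega_{\tau_n}$ for some real sequence $(\tau_n)$. If $(\tau_n)$ were unbounded, a subsequence would tend monotonically to $+\infty$ or to $-\infty$, which would place $\widehat{\omega}$ in $\O(\omega)$ or $\A(\omega)$, a contradiction. Hence $(\tau_n)$ is bounded, and after passing to a subsequence $\tau_n\to\tau^*$. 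Continuity of the base flow then gives $\widehat{\omega}=\omega_{\tau^*}\in\mathcal{O}(\omega)$.

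Next we show that $\tau\mapsto\omega_\tau$ is a homeomorphism from $\mathbb{R}$ onto $\mathcal{O}(\omega)$ with the subspace topology. Continuity is given by the base flow. For injectivity, suppose $\omega_{\tau_1}=\omega_{\tau_2}$ with $T=\tau_2-\tau_1\neq0$. Then $\omega$ is $T$-periodic, so $\omega_{nT}=\omega$ for every $n$, and therefore $\omega\in\O(\omega)$, contradicting the theorem. For continuity of the inverse, let $\omega_{\tau_n}\to\omega_\tau$. The argument of the first paragraph, applied along any subsequence, shows that $(\tau_n)$ is bounded and that each convergent subsequence $\tau_{n_k}\to\tau'$ gives $\omega_{\tau'}=\omega_\tau$. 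Injectivity then forces $\tau'=\tau$, so $\tau_n\to\tau$. The one point needing care is the first step: the definitions of $\A$ and $\O$ use monotone sequences, so we must extract a monotone subsequence when $|\tau_n|\to\infty$. This is routine.

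Finally, $\Omega$ is not compact. If it were, $\H(\omega)$ would be compact as a closed subset of $\Omega$. But $\H(\omega)\cong\mathbb{R}$, which is not compact. Equivalently, as in Corollary~\ref{611}, the sequence $(\omega_n)$ would have a convergent subsequence, producing a point of $\O(\omega)$, which contradicts $\O(\omega)=\varnothing$. I expect no step to be a real obstacle. The most delicate point is the inverse-continuity argument, since it relies on the emptiness of both limit sets together with injectivity, and that injectivity was itself derived from $\O(\omega)=\varnothing$.
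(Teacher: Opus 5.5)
Your proposal is correct and takes the same route as the paper, which simply declares the corollary an immediate consequence of Theorem~\ref{615}. Your argument supplies the omitted details: unbounded times would produce points of $\A(\omega)\cup\O(\omega)$, a period would put $\omega$ in $\O(\omega)$, and the inverse is continuous by the subsequence argument. Like the paper, you tacitly assume $\Omega$ is Hausdorff and first countable (e.g.\ metrizable), so that closure points and continuity can be handled with sequences and the sequentially defined limit sets capture every accumulation point of the orbit.
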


Note that Cor.~\ref{616} is also valid if \eqref{2} has a nonuniform exponential growth that cannot be written as a uniform exponential growth. In other words, if $\Sigma_{\mathrm{N}\!\exp\!\mathrm{D}}(\omega)$ is bounded but $\Sigma_{\exp\!\mathrm{D}}(\omega)$ is not bounded, then there are no limiting equations.


\subsection{Revisited examples}

We conclude this work by revisiting Examples \ref{550}, \ref{056} and \ref{0560} from Sect.~\ref{dynamic}, reformulated in the skew-product context, to illustrate our conclusions.
 In the following, the spaces $\Omega$ are families of functions defined on $\mathbb{R}$, taking values on $\mathbb{R}$ (since these are scalar examples). These spaces of functions are given the compact-open topology \cite[p.~301]{Bourbaki}.

\begin{example}
{\rm Consider $\Omega=\{t\mapsto \frac{1}{1+|t+\tau|}:\tau\in \mathbb{R}\}\cup\{t\mapsto 0\}$.  Note that this $\Omega$ homeomorphic to the unit circle $\mathbb{S}^1$, thus compact. This skew-product parametrizes the family of differential equations \eqref{55}, and $\dot{x}=0$. As the family of equations \eqref{55} has polynomial dichotomy and growth (as seen on Ex.~\ref{550}), then there is $p_\tau$-dichotomy and growth for every equation in the orbit $\mathcal{O}:=\{t\mapsto \frac{1}{1+|t+\tau|}:\tau\in \mathbb{R}\}$, where $p_\tau$ are the translations of the polynomial growth rate.

\smallskip

Note, however, there is no dichotomy over the whole $\Omega$, since on the other orbit $\A(\omega_0)=\O(\omega_0)$, where $\omega_0(t)=\frac{1}{1+|t|}$, the associated equation is $\dot{x}=0$, which has no kind of dichotomy, as every solution is bounded (Prop.~\ref{300}).
}    
\end{example}

\begin{example}
{\rm
 Consider $\Omega=\{t\mapsto |t+\tau|:\tau\in \mathbb{R}\}$.  This skew-product parametrizes the family of equations \eqref{56}. As this family of equations has quadratic dichotomy and growth (Ex.~\ref{056}), then the skew-product has $\qeg_\tau$-dichotomy and growth, where $\qeg_\tau$ are the translations of the quadratic growth rate.

\smallskip
 
Finally, note that $\Omega$ is not compact, since it is homeomorphic to the real line $\mathbb{R}$. Moreover, by taking point-wise limits of translations of any $\omega\in \Omega$, for sequences $(\tau_n)_{n\in \mathbb{N}}$ with $\tau_n\searrow -\infty$ or $\tau_n\nearrow\infty$, the conclusion is $\lim_{n\to \infty}\omega_{\tau_n}(t)=+\infty$ for every $t\in \mathbb{R}$. Hence, even if we take $\Omega$ as a subset of a larger space of functions, we obtain $\H(\omega)=\mathcal{O}(\omega)$ for every $\omega\in \Omega$, that is, there are no limit points.
}
\end{example}

\begin{example}
{\rm 
 Fix $0<3\eta<\lambda$. Consider $\Omega=\{t\mapsto -(\lambda+\eta(t+\tau)\sin(t+\tau)):\tau\in \mathbb{R}\}$. This skew-product parametrizes the family of equations \eqref{560}, which has nonuniform exponential dichotomy and growth, as seen on Ex.~\ref{0560}. On the other hand, as $\Omega$ is homeomorphic to $\mathbb{R}$, then there is nonuniform exponential dichotomy and growth in the sense of skew-products.
} 
\end{example}

\end{document}